\documentclass[11pt]{article}

\usepackage{amsmath}
\usepackage{amssymb}
\usepackage{amsthm}
\usepackage{indentfirst}
\usepackage{tikz-cd}

\usepackage{xcolor}

\usepackage{hyperref}
\hypersetup{colorlinks=true,linkcolor=red,pdfpagemode=UseNone,pdfstartview=FitH}

\newcommand{\Z}{\ensuremath{\mathbb{Z}}}
\newcommand{\Q}{\ensuremath{\mathbb{Q}}}

\usepackage[margin=2.6cm]{geometry}

\theoremstyle{plain}
\newtheorem{theorem}{Theorem}[section]
\newtheorem{lemma}[theorem]{Lemma}
\newtheorem{corollary}[theorem]{Corollary}
\newtheorem{proposition}[theorem]{Proposition}

\theoremstyle{definition}

\newtheorem{remark}[theorem]{Remark}

\DeclareMathOperator{\Hom}{Hom}
\DeclareMathOperator{\Gal}{Gal}

\DeclareMathOperator{\ord}{ord}
\DeclareMathOperator{\ab}{ab}

\DeclareMathOperator{\rest}{res}
\DeclareMathOperator{\infl}{inf}
\DeclareMathOperator{\red}{red}

\DeclareMathOperator{\img}{im}
\DeclareMathOperator{\coker}{coker}
\DeclareMathOperator{\lcm}{lcm}

\DeclareMathOperator{\norm}{N}

\newcommand{\ff}{f'}

\begin{document}

\title{On the Chevalley-Bass number of a field}

\author{Jean Gillibert \and Florence Gillibert \and Gabriele Ranieri}

\date{April 2026}

\maketitle

\begin{abstract}
We give upper and lower bounds on the Chevalley-Bass number of a field of characteristic zero, whenever this quantity is well-defined. We also describe an algorithm which computes the Chevalley-Bass number of a field, provided its maximal abelian subextension is known. As a primary application, we improve the value of a constant related to exponential diophantine equations. 
\end{abstract}

%%%%%%%%%%%%%%%%%%%%%%%%%%%%%%%%%%%%%%%%%%%%%

\section{Introduction}

Let $K$ be a field of characteristic $0$ such that $K_{\ab}/\Q$, the maximal abelian subextension of $K/\Q$, has finite degree. It follows from works of Chevalley \cite{chevalley1951} and Bass \cite{bass1965} that there exists a positive integer $\Lambda$ such that
\begin{equation}
\label{eq:chbassnumber}
\text{for any positive integer $n$}, \quad (K(\zeta_{\Lambda n})^\times)^{\Lambda n}\cap K^\times \subseteq (K^\times)^n.
\end{equation}

We refer the reader to Bilu's article \cite{bilu2023} for a detailed exposition of this result, motivated by the study of exponential Diophantine equations. Following the terminology introduced by Bilu, the smallest integer $\Lambda$ satisfying property \eqref{eq:chbassnumber} is called the \emph{Chevalley-Bass number} of $K$.

\medskip

We shall prove the following result.

\begin{theorem}
\label{thm:main}
Let $K$ be a field of characteristic $0$ such that $K_{\ab}/\Q$, the maximal abelian subextension of $K/\Q$, has finite degree. Let $\lambda$ be the number of roots of unity contained in $K$, let $f$ be the conductor of $K_{\ab}/\Q$, and let
$$
\ff:=
\begin{cases}
\prod_{p\mid \lambda} p^{\ord_p(f)} & \text{if $4\mid f$} \\
4\prod_{p\mid \lambda} p^{\ord_p(f)} & \text{if $f$ is odd.}
\end{cases}
$$
Then the Chevalley-Bass number $\Lambda$ of $K$ satisfies
$$
\lcm\left(4,\lambda,\frac{\ff}{\lambda}\right)\mid \Lambda \mid \ff.
$$
\end{theorem}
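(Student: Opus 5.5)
The plan is to reduce everything to a Galois-cohomological computation on cyclotomic extensions of $K$, one prime at a time.

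\textbf{Reduction to prime powers.} Property \eqref{eq:chbassnumber} for $\Lambda$ and all $n$ follows from the same property for all prime powers $n=p^k$. Indeed, membership in $(K^\times)^n$ can be checked on the $p$-primary parts of $n$, using Bezout to combine roots of coprime orders. Also, $K(\zeta_{\Lambda p^k})$ contains $K(\zeta_{p^{k+\ord_p\Lambda}})$, and an element that is a $\Lambda p^k$-th power in the big field is in particular a $p^{k+\ord_p \Lambda}$-th power there. So it suffices to understand, for each prime $p$ and each $k,e\ge0$, the kernel of
$$K^\times/K^{\times p^k}\longrightarrow L^\times/L^{\times p^{k+e}},\qquad L=K(\zeta_{p^{k+e}}),$$
after composing with $x\mapsto x^{p^e}$. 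Concretely, suppose $a=b^{p^{k+e}}$ with $b\in L$, and put $c=b^{p^e}$. Then $\sigma\mapsto\sigma(c)/c$ is a cocycle $G=\Gal(L/K)\to\mu_{p^k}$, and $a\in K^{\times p^k}$ if and only if this cocycle is a coboundary, i.e.\ its class in $H^1(G,\mu_{p^k})$ vanishes.

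\textbf{Upper bound $\Lambda\mid\ff$.} I take $e=\ord_p\ff$. The group $G$ embeds via the cyclotomic character $\chi$ into $(\Z/p^{k+e})^\times$. Its image is $\Gal(\Q(\zeta_{p^{k+e}})/\Q(\zeta_{p^{k+e}})\cap K_{\ab})$. By the conductor hypothesis, $\Q(\zeta_{p^\infty})\cap K_{\ab}\subseteq\Q(\zeta_{p^{\ord_p f}})$, so the image contains the subgroup $U_e=\{u\equiv1 \bmod p^{e}\}$.

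Several cases need care. If $p\nmid\lambda$, then $e=0$ (or $e=2$ when $p=2$ and $f$ is odd, which is where the factor $4$ comes from). For such $p$, $\mu_{p^\infty}(K)$ is trivial (resp.\ $\pm1$), and $H^1$ is killed by a direct argument: the restriction to $U_e$ is injective because the index is prime to $p$ or controlled. If $p\mid\lambda$, I use $e=\ord_p f\ge\ord_p\lambda$.

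Write $\sigma(b)=\xi_\sigma b$ with $\xi_\sigma\in\mu_{p^{k+e}}$. Then $\sigma(c)/c=\xi_\sigma^{p^e}$. On $U_e$, which for $p$ odd or $e\ge2$ is procyclic, generated by $\tau$ with $\chi(\tau)=1+p^e$, the cocycle condition forces $\xi_\tau^{p^e}$ to be a coboundary value. The standard formula $H^1(\langle\tau\rangle,\mu)=\ker(N)/(\tau-1)\mu$ shows that the class dies once raised to the power $p^e$. The remaining quotient $G/U_e$ is then handled by inflation–restriction, using that $\mu_{p^k}^{U_e}$ has order $\le p^e$.

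\textbf{Lower bounds.} I would exhibit explicit counterexamples for smaller $\Lambda$.
\begin{itemize}
\item For $4\mid\Lambda$: $-4=(1+i)^4$ is a $4$-th power in $K(i)$, but it is never a square, or it fails for the relevant $n$ in $K$.
\item For $\lambda\mid\Lambda$: take $\zeta_\lambda=\zeta_{\lambda\Lambda}^{\Lambda}$-type identities. If $p^{\ord_p\lambda}\nmid\Lambda$, then $\zeta_{p^{\ord_p\lambda}}$ becomes a $\Lambda p^k$-th power in $K(\zeta_{\Lambda p^k})$ without being a $p^k$-th power in $K$.
\item For $\ff/\lambda\mid\Lambda$: use an element of $K_{\ab}$, such as a Gauss-sum type element or $\prod(1-\zeta_{p^j})$, whose $p^k$-th root generates a cyclotomic subextension of conductor exactly $p^{\ord_p f}$.
\end{itemize}

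I expect this last lower bound, $\ff/\lambda$, to be the main obstacle: it requires constructing a genuinely nontrivial class in $H^1$, i.e.\ showing the cocycle above is not a coboundary. I would try Kummer elements coming from the cyclotomic units of $\Q(\zeta_{p^{\ord_p f}})\cap K_{\ab}$ first.
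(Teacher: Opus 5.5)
\textbf{Your first reduction is false, so your upper-bound computation takes place in the wrong Galois group.} Suppose $x=b^{\Lambda n}$ with $b\in K(\zeta_{\Lambda n})$. Then $x$ is a $p^{k+\ord_p\Lambda}$-th power in $K(\zeta_{\Lambda n})$, but not necessarily in the subfield $K(\zeta_{p^{k+\ord_p\Lambda}})$. There is no way to descend, because the required root may only exist after adjoining prime-to-$p$ roots of unity.

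Here is a concrete counterexample. Take $K=\Q(\zeta_p)$ with $p$ odd. Then $G_{4p^k}\simeq(\Z/4\Z)^\times\times\Omega_p^{1,k}$ and $H^1(G_{4p^k},\mu_{p^k})=0$. So the value $4$ passes \eqref{eq:chbassnumber} for every $n=p^k$, whereas $\Lambda=4p$; the violation only appears at $n=pq$ with $q\equiv1\pmod p$ prime (see below). The same entanglement shows up in the identity $K(\zeta_m)=\Q(\zeta_f)$, valid for $p$ odd, $p\mid\lambda$ and $f=p^{\ord_p f}m$ with $p\nmid m$ (proof of Lemma~\ref{lem:2}).

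So your cocycle has to live on $G_{\ff n}=\Gal(K(\zeta_{\ff n})/K)$, and your idea does survive there. Put $t=\ord_p(\ff n)$, $e=\ord_p\ff$, and let $n'$ be the prime-to-$p$ part of $\ff n$. Then the subgroup $U=\Gal(K(\zeta_{\ff n})/K(\zeta_{n'},\zeta_{p^e}))$ is all of $\Omega_p^{e,t}$. The reason is that, after replacing $K$ by $K_{\ab}$, the $p$-part of the conductor of $K(\zeta_{n'},\zeta_{p^e})$ still divides $p^e$. Lemma~\ref{lem:2b} and inflation--restriction, with $\mu_{p^t}^U=\mu_{p^e}$, then give surjectivity of $H^1(G_{\ff n},\mu_{p^e})\to H^1(G_{\ff n},\mu_{p^t})$.

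This conductor statement, for $K(\zeta_{n'})$ rather than for $K$, is the missing step; it is the key input of Lemma~\ref{lem:2}. Two smaller points: the primes $p\nmid\lambda$ need a genuine vanishing argument as in Lemma~\ref{lem:1}, and $2$ always divides $\lambda$, so the delicate case is $\zeta_4\notin K$.

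\textbf{Your lower-bound witnesses are misassigned.} The witness you give for $\lambda\mid\Lambda$ does not work. Let $s=\ord_p\lambda$ and $a=\ord_p\Lambda$. For $\zeta_{p^s}$ to be a $\Lambda p^k$-th power ($k\geq1$) in $K(\zeta_{\Lambda p^k})$, that field must contain $\zeta_{p^{s+a+k}}$. But its $p$-power roots of unity have order at most $p^{\max(\ord_p f,\,a+k)}$. For $K=\Q(\zeta_{p^s})$ with $p$ odd this never happens, even though there $\Lambda=4p^s$ and the factor $p^s$ comes from $\lambda$ alone.

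What $\zeta_{p^s}$ does detect, once $n$ may contain $m$ and you use $K(\zeta_m)=\Q(\zeta_f)$, is $p^{\ord_p f-s}\mid\Lambda$. That is exactly the $\ff/\lambda$ bound you flag as the main obstacle, and it needs no Gauss sums or cyclotomic units (Lemma~\ref{lem:borne_inf}).

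For $\lambda\mid\Lambda$ you need auxiliary primes:
\begin{itemize}
\item Pick a prime $q\equiv1\pmod{p^s}$ with $q\nmid f$, and let $M\subseteq\Q(\zeta_q)$ be cyclic of degree $p^s$. Since $q$ is unramified in $K_{\ab}$, $K\cap M=\Q$. By Kummer theory, $KM=K(\sqrt[p^s]{\alpha})$ with $[KM:K]=p^s$.
\item If $a<s$, write $\Lambda=p^ar$ and $n=p^{s-a}q$. Then $\alpha^{rq}=(\sqrt[p^s]{\alpha})^{\Lambda n}$ is a $\Lambda n$-th power in $K(\zeta_{\Lambda n})$.
\item It is not an $n$-th power in $K$: since $p\nmid rq$, that would force $\alpha\in(K^\times)^p$, contradicting $[KM:K]=p^s$.
\end{itemize}
This is the element-level content of Lemma~\ref{lem:exponent}.

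Finally, $-4=(1+\zeta_4)^4$ gives $4\mid\Lambda$ only when $\zeta_4\notin K$, since otherwise $-4=(2\zeta_4)^2$. When $\zeta_4\in K$, the factor $4$ must come from $\lambda\mid\Lambda$.
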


It follows from Theorem~\ref{thm:main} that the Chevalley-Bass number of $K$ has exactly the same prime factors as $\lambda$, the number of roots of unity in $K$. In particular, if $K_{\ab}/\Q$ is totally real then the Chevalley-Bass number of $K$ is a power of $2$.

Actually, our technique yields an algorithm which allows to compute the Chevalley-Bass number of a field, provided $K_{\ab}/\Q$ is known
(see \S{}\ref{sub:algorithm}). In order to illustrate this,  we construct in \S{}\ref{sub:examples}, for any prime $p\geq 3$ and suitable values of $m$, fields of conductor $f=p^4m$ and number of roots of unity $\lambda=2p^2$, such that $\Lambda$ takes all possible values  in Theorem~\ref{thm:main}, namely: $4p^2$, $4p^3$ and $4p^4$. One can construct, by the same strategy, similar examples in which $f$ and $\lambda$ have arbitrary size.

Our results allow us to answer some questions asked by Bilu \cite{bilu2023}:
\begin{enumerate}
\item The fields $\Q$ and $\Q(\zeta_4)$ have both Chevalley-Bass number $4$.
\item There exists an algorithm which computes the Chevalley-Bass number of a given number field $K$. The same applies to an arbitrary field provided $K_{\ab}/\Q$ is known.
\item The Chevalley-Bass number of a field is always a multiple of $4$.
\item Conversely, any multiple of $4$ is the Chevalley-Bass number of some cyclotomic field. Indeed, if $4\mid r$ then the Chevalley-Bass number of $\Q(\zeta_r)$ is $r$. We also observe that if $r$ is odd then the Chevalley-Bass number of $\Q(\zeta_r)$ is $4r$.
\end{enumerate}

Unsurprisingly, our proof is based on Galois cohomology computations, a strategy that was already used by Laurent \cite{laurent1984} to tackle a similar  problem. To start, we give in \S{}\ref{sub:cohomologicalcriterion} a cohomological characterisation of the Chevalley-Bass number of $K$ (see Corollary~\ref{cor:prime_by_prime}), and prove that $K$ and $K_{\ab}$ have the same Chevalley-Bass number. Then we focus on general properties of the cohomology groups and maps involved. One notable consequence of our computations is the following (see Lemma~\ref{lem:exponent}).

\begin{proposition}
\label{prop:intro}
Under the assumptions of Theorem~\ref{thm:main}, for any positive integer $n$ the Galois cohomology group
$$
H^1(\Gal(K(\zeta_n)/K),\mu_n)
$$
is killed by $\lambda$. Moreover, it has exponent $\lambda$ for infinitely many values of $n$.
\end{proposition}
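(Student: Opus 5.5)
The plan is to compute the cohomology group through the cyclotomic character. Put $L=K(\zeta_n)$ and $G=\Gal(L/K)$. Let $\chi\colon G\hookrightarrow(\Z/n\Z)^\times$ be the cyclotomic character, so $\sigma(\zeta)=\zeta^{\chi(\sigma)}$ for $\zeta\in\mu_n$. Since $G$ is abelian, every $\sigma\in G$ is central.

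\emph{First assertion.} I would use the standard fact that conjugation by an element $\sigma\in G$ acts trivially on $H^1(G,\mu_n)$. When $\sigma$ is central, this conjugation action is induced by the coefficient map $\zeta\mapsto\sigma(\zeta)=\zeta^{\chi(\sigma)}$. Hence multiplication by $\chi(\sigma)$ is the identity on $H^1$, so $H^1$ is killed by $\chi(\sigma)-1$ for every $\sigma$. It is also killed by $n$. Therefore $H^1$ is killed by
$$d:=\gcd\bigl(n,\ \chi(\sigma)-1 \ (\sigma\in G)\bigr).$$
It remains to show $d\mid\lambda$. By construction $d\mid n$, and every $\chi(\sigma)$ is $\equiv1\pmod d$. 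So $\chi(G)$ lies in the kernel of $(\Z/n\Z)^\times\to(\Z/d\Z)^\times$, which means $G$ fixes $\zeta_d$. Thus $\zeta_d\in L^G=K$, hence $d\mid\lambda$. This step is routine.

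\emph{Second assertion.} I need infinitely many $n$ for which $H^1(G,\mu_n)$ contains a class of exact order $\lambda$. I would take $n=\lambda m$ with $m$ ranging over suitable integers, for instance $m$ a large power of $\lambda$ (doubled if necessary at $2$), and argue as follows.

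\begin{enumerate}
\item Since $\zeta_\lambda\in K$, every value $\chi(\sigma)$ is $\equiv 1\pmod\lambda$.
\item Because $K_{\ab}$ has finite degree, $K\cap\Q(\zeta_n)$ is contained in $K_{\ab}$, and this containment bounds $[K\cap\Q(\zeta_n):\Q]$ independently of $n$. So $G$ is a subgroup of bounded index in $\ker\bigl((\Z/n\Z)^\times\to(\Z/\lambda\Z)^\times\bigr)$.
\item I would write down the candidate cocycle $c(\sigma)=\zeta_{n}^{(\chi(\sigma)-1)/\lambda}$, after first checking that $(\chi(\sigma)-1)/\lambda$ is well defined modulo $n$ for these choices of $n$. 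Step 1 guarantees the division by $\lambda$ makes sense.
\item For $\lambda$-power $n$ the relevant part of $G$ is essentially cyclic, so I would use the cyclic-group formula $H^1=\ker N/(\sigma-1)\mu_n$ with $\sigma$ a generator. The goal is to show that $c$ is a cocycle and that $\lambda\cdot[c]$ is a coboundary, while $(\lambda/p)\cdot[c]$ is not a coboundary for each prime $p\mid\lambda$.
\end{enumerate}

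Alternatively, I could use Kummer theory, which identifies $H^1(G,\mu_n)$ with $\ker\bigl(K^\times/K^{\times n}\to L^\times/L^{\times n}\bigr)$. Then the task is to exhibit $x\in K^\times$ that becomes an $n$-th power in $L$ but whose class in $K^\times/K^{\times n}$ has order exactly $\lambda$.

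The main obstacle is this lower bound, in particular showing that the class is not killed by $\lambda/p$. Two difficulties arise there: the index of $G$ in the kernel in step 2 can interfere, and the prime $2$ must be treated separately, since the relevant unit group need not be cyclic. My first attempt would be to choose $n$ so that $n/\lambda$ is coprime to the conductor-related obstruction outside the primes dividing $\lambda$, and so that the exponents of the primes of $\lambda$ in $n$ are large enough that $G$ contains an element with $\chi(\sigma)=1+\lambda u$, $u$ a unit. Working one prime $p\mid\lambda$ at a time and then combining would then give exponent exactly $\lambda$.
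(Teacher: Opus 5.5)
Your argument for the first assertion is correct, and neater than the paper's. The paper uses the same tool, Sah's lemma: a central $\sigma$ makes $\chi(\sigma)-1$ kill $H^1$. But it applies it one prime at a time. It invokes Lemma~\ref{lem:1} for $p\nmid\lambda$, lifts the generator $1+p^{\ord_p(\lambda)}$ of the quotient $\Omega_p^{\ord_p(\lambda),\ord_p(n)}$ of $G_n$ when $p$ is odd or $\zeta_4\in K$, and uses an element $\sigma\equiv 3\pmod 4$ otherwise. Taking the gcd over all $\sigma$ at once and observing $\mu_d=\mu_n^{G}\subseteq K$ removes every case distinction, and even shows the annihilator is $\gcd(n,\lambda)$.

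The second assertion is not proved, and the concrete route you propose cannot work. If $n$ is divisible only by primes dividing $\lambda$, then $H^1(G_n,\mu_n)$ need not have exponent $\lambda$:
\begin{itemize}
\item for $K=\Q(i)$ and $n=4^k$ one has $G_n=\Omega_2^{2,2k}$, which is cyclic, so $H^1(G_n,\mu_n)=0$ by Lemma~\ref{lem:2b};
\item for $K=\Q(\zeta_3)$ and $n=6^k$ with $k\ge 2$, one finds $H^1(G_n,\mu_n)\simeq\Z/2\Z$.
\end{itemize}
So no computation in your step 4 can produce a class of order $\lambda$. Step 3 is also ill-posed. Since $\chi(\sigma)$ is only a class mod $n$, $(\chi(\sigma)-1)/\lambda$ is only defined mod $n/\lambda$, so $c(\sigma)$ is only defined up to $\mu_\lambda$. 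It is really the Kummer cocycle of $\zeta_\lambda$, which lives on $G_n$ only when $\zeta_{\lambda n}\in K(\zeta_n)$. Finally, an element with $\chi(\sigma)=1+\lambda u$ only reproduces the upper bound.

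The missing idea is that classes of order $\lambda$ come from characters. The map $\Hom(G_n,\mu_\lambda)=H^1(G_n,\mu_\lambda)\to H^1(G_n,\mu_n)$ induced by $\mu_\lambda\subseteq\mu_n$ has kernel the image of $(\mu_{n/\lambda})^{G_n}$ under the connecting map. That kernel is cyclic of order dividing $\lambda$. One then makes $\Hom(G_n,\mu_\lambda)$ large by putting into $n$ auxiliary primes $q\equiv 1\pmod{p^{\ord_p(\lambda)}}$, for each $p\mid\lambda$, with $q\nmid 2f$. For such primes $(\Z/q\Z)^\times$ splits off $G_n$ as a direct factor. This is the paper's argument, with $\ff n$ in place of $n$.

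The simplest instance: take a prime $q\equiv 1\pmod\lambda$ with $q\nmid f$, and set $n=\lambda q$. Then $G_n\simeq(\Z/q\Z)^\times$ and $\mu_n\simeq\mu_\lambda\times\mu_q$ as $G_n$-modules. Hence $\Hom(G_n,\mu_\lambda)\simeq\Z/\lambda\Z$ is a direct summand of $H^1(G_n,\mu_n)$, and there are infinitely many such $q$.
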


Actually, we also prove that the order of this group is unbounded when $K$ is fixed and $n$ varies; in particular \cite[Lemma~2]{laurent1984} is not correct as stated, but this does not affect the rest of the paper since the key point is the boundedness of the exponent.

It follows from Proposition~\ref{prop:intro} that $\lambda$ satisfies the following property, slightly weaker than \eqref{eq:chbassnumber}.

\begin{corollary}
\label{cor:intro}
For any positive integer $n$, the following holds: if $x\in K^\times$ is a $\lambda{}n$th power in $K(\zeta_{\lambda n})$ then $x=\varepsilon y^n$ for some $y\in K$ and some $\lambda$th root of unity $\varepsilon\in K$.
\end{corollary}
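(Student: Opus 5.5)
We derive the corollary from Proposition~\ref{prop:intro} through the Kummer-theoretic exact sequence attached to the extension $L:=K(\zeta_{\lambda n})$ of $K$. Put $N=\lambda n$ and $G=\Gal(L/K)$. The Kummer sequence $1\to\mu_N\to L^\times\xrightarrow{N}(L^\times)^N\to 1$ gives, after taking $G$-invariants and using Hilbert 90 ($H^1(G,L^\times)=0$), an isomorphism
$$
\frac{(L^\times)^N\cap K^\times}{(K^\times)^N}\;\cong\; H^1(G,\mu_N).
$$
The map sends $x=z^N$, with $z\in L$, to the cocycle $\sigma\mapsto \sigma(z)/z$. By Proposition~\ref{prop:intro} with $N$ in place of $n$, the right-hand side is killed by $\lambda$. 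Hence for $x\in K^\times\cap (L^\times)^N$ we get $x^\lambda\in (K^\times)^{N}=(K^\times)^{\lambda n}$.

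Write $x^\lambda=w^{\lambda n}$ with $w\in K^\times$. Then $(x/w^n)^\lambda=1$, so $\varepsilon:=x/w^n$ is a $\lambda$th root of unity lying in $K$, since $x,w\in K$. Thus $x=\varepsilon w^n$, which is the claim with $y=w$.

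The only substantive input is Proposition~\ref{prop:intro}; once it is available the argument is formal. The one point to check is that the proposition is applied at level $N=\lambda n$, which is allowed because it holds for every positive integer. Beyond that, the connecting-map identification is standard and the final step is elementary. If one prefers to avoid the cohomological identification, one can argue directly: the cocycle $c(\sigma)=\sigma(z)/z$ satisfies $c^\lambda=\sigma(u)/u$ for some $u\in\mu_N$. Then $(z^\lambda/u)$ is $G$-invariant, hence lies in $K$, and its $N$th power equals $x^\lambda u^{-N}=x^\lambda$. This reproduces the same conclusion.
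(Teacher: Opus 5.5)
Your proof is correct and is essentially the paper's argument. The paper also identifies the kernel of $K^\times/(K^\times)^{\lambda n}\to K(\zeta_{\lambda n})^\times/(K(\zeta_{\lambda n})^\times)^{\lambda n}$ with $H^1(G_{\lambda n},\mu_{\lambda n})$ (via inflation--restriction and Kummer theory, which amounts to your Kummer sequence plus Hilbert~90), uses that this group is killed by $\lambda$, and concludes from $x^\lambda=y^{\lambda n}$ exactly as you do.
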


This allows us to improve on Proposition~4.4 of \cite{BLNOW2025}, which is an explicit version of a result of Laurent \cite{laurent1984}.

\begin{corollary}
\label{cor:LaurentBilu}
Let $K$ be a finitely generated field of characteristic $0$, let $\lambda$ be the number of roots of unity contained in $K$, and let
$$
\Gamma:= \{x\in \overline{K}^\times ~|~ x^n\in K^\times ~\text{for some}~ n>0\}.
$$
Let $\alpha_1,\dots,\alpha_s$ be elements of $\Gamma$ such that $\alpha_1+\cdots +\alpha_s=1$ and no proper subsum of $\alpha_1+\cdots +\alpha_s$ vanishes. Then there exist roots of unity $\xi_1,\dots,\xi_s$ in $\overline{K}^\times$ such that $\alpha_i^\lambda\xi_i$ belongs to $K$.
\end{corollary}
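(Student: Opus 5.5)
The plan is to deduce the statement from Corollary~\ref{cor:intro}, after a preliminary step showing that the $\alpha_i$ all lie in a single cyclotomic extension of $K$.

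\textbf{Step 1: $\alpha_i\in K(\mu_\infty)$.} Write $K_\infty:=K(\mu_\infty)$ and $L:=K_\infty(\alpha_1,\dots,\alpha_s)$.

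Each $\alpha_i$ satisfies $\alpha_i^{m_i}\in K$ for some $m_i>0$, and $K_\infty$ contains all roots of unity. So $L/K_\infty$ is a finite Kummer extension. Its Galois group $G$ is finite abelian and acts on each $\alpha_i$ through a character $\chi_i\colon G\to\mu_\infty$, namely $\sigma(\alpha_i)=\chi_i(\sigma)\alpha_i$.

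Apply the averaging operator $\frac{1}{|G|}\sum_{\sigma\in G}\sigma$ to the relation $\alpha_1+\cdots+\alpha_s=1$. The average of $\chi_i$ over $G$ is $1$ if $\chi_i$ is trivial and $0$ otherwise. This gives $\sum_{i\in I}\alpha_i=1$, where $I$ is the set of indices with $\chi_i$ trivial.

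Subtracting from the original relation gives $\sum_{i\notin I}\alpha_i=0$. Now $I\neq\emptyset$, since otherwise we would get $0=1$. Hence, if $I$ were a proper subset of $\{1,\dots,s\}$, we would have a vanishing proper subsum, contradicting the hypothesis. Therefore every $\chi_i$ is trivial, so $\alpha_i\in K_\infty$.

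Since the $\alpha_i$ are finitely many algebraic elements, there is an integer $N$ with $\alpha_i\in K(\zeta_N)$ for all $i$.

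\textbf{Step 2: apply Corollary~\ref{cor:intro}.} Fix $i$, put $\beta=\alpha_i$, and choose $m>0$ with $a:=\beta^m\in K^\times$.

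Choose a positive integer $n$ with $m\mid n$ and $N\mid \lambda n$; for instance $n=mN$ works. Then $\beta\in K(\zeta_{\lambda n})$. The element $x:=a^{\lambda n/m}=\beta^{\lambda n}$ lies in $K^\times$ and is a $\lambda n$th power in $K(\zeta_{\lambda n})$.

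By Corollary~\ref{cor:intro}, $x=\varepsilon y^n$ for some $y\in K$ and some $\lambda$th root of unity $\varepsilon\in K$. Hence $(\beta^\lambda)^n=\varepsilon y^n$, so $(\beta^\lambda/y)^n=\varepsilon$ is a root of unity, and therefore $\beta^\lambda/y$ is a root of unity. Setting $\xi_i:=y/\beta^\lambda$ gives $\alpha_i^\lambda\xi_i=y\in K$, as required.

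\textbf{Expected obstacle.} Step~2 is a formal consequence of Corollary~\ref{cor:intro}, so the real content lies in Step~1. There, the point needing care is that the action of $\Gal(L/K_\infty)$ on each $\alpha_i$ is given by a root-of-unity-valued character of a finite group. This holds because $K_\infty$ contains $\mu_{m_i}$ and $\alpha_i^{m_i}\in K$. It is what makes the averaging argument isolate a subsum and turn the non-degeneracy hypothesis into the conclusion $\alpha_i\in K(\mu_\infty)$.

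The finite generation of $K$ is not needed in this argument, beyond ensuring that $K$ is a field to which Corollary~\ref{cor:intro} applies. That requires $K_{\ab}/\Q$ to be finite, which holds for finitely generated fields of characteristic $0$.
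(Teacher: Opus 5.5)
Your proof is correct and follows the paper's route: you first place the $\alpha_i$ in a cyclotomic extension of $K$, then apply Corollary~\ref{cor:intro} to $\alpha_i^{\lambda n}$ and extract the root of unity. The only difference is in that first step: the paper cites the first step of the proof of Proposition~4.4 of \cite{BLNOW2025} to get $\alpha_i\in K(\zeta_n)$, whereas you prove the weaker but sufficient containment $\alpha_i\in K(\mu_\infty)$ yourself by the Kummer-character averaging argument, and compensate by taking $n=mN$.
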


\begin{remark}
\label{remark:forYuri}
As a consequence of Corollary~\ref{cor:LaurentBilu}, one can replace in Propositions~4.5 of \cite{BLNOW2025} the Chevalley-Weil number $\Lambda$ of $K$ by the number $\lambda$ of roots of unity contained in $K$. Likewise, one can replace $\rho\exp\exp(m/\log m)$ by $2\rho\exp(m)$ in Proposition~4.7 of \emph{loc. cit.} Details are given after the proof of Corollary~\ref{cor:LaurentBilu}.

The same remark applies to \cite{laurent1984}: in \S{}2.2 one can take the exponent $b$ to be the number of roots of unity contained in $K$.
\end{remark}

In order to keep the paper self-contained, we give full details. The ingredients are elementary: Galois theory of cyclotomic extensions, cohomology of cyclic groups, and inflation-restriction sequences.

%%%%%%%%%%%%%%%%%%%%%%%%%%%%%%%%%%%%%%%%%%%%%
%%%%%%%%%%%%%%%%%%%%%%%%%%%%%%%%%%%%%%%%%%%%%

\section{Proofs}

From now on, we consider a field  $K$ of characteristic $0$, with the property that $K_{\ab}/\Q$, the maximal abelian subextension of $K/\Q$, has finite degree.
This condition is trivially satisfied if $K$ is a number field, and more generally if $K$ is a finitely generated field.

For any positive integer $n$, we denote by $\mu_n$ the cyclic group of $n$th roots of unity in an algebraic closure of $K$, endowed with its natural Galois action. We denote by $\zeta_n$ a primitive $n$th root of unity, and we let
$$
G_n := \Gal(K(\zeta_n)/K).
$$

It is well-known that $G_n$ is isomorphic to a subgroup of $(\Z/n\Z)^\times$, which acts by exponentiation on the set of $n$th roots of unity.

The Chinese remainder Theorem induces a canonical isomorphism
$$
(\Z/n\Z )^\times \simeq  \prod_{p\mid n} (\Z/p^{\ord_p(n)}\Z)^\times. 
$$

Finally, we denote by $\lambda$ the number of roots of unity contained in $K$. Equivalently, $\lambda$ is the largest integer with the property that $\Q(\zeta_\lambda)\subseteq K$.

%%%%%%%%%%%%%%%%%%%%%%%%%%%%%%%%%%%%%%%%%%%%%

\subsection{Cohomological characterisation of the Chevalley-Bass number}
\label{sub:cohomologicalcriterion}

In this section we prove that the Chevalley-Bass number can be characterised purely in terms of the cohomology of the group $G_n$. As a by-product, we deduce that the Chevalley-Bass number of $K$ is equal to that of $K_{\ab}$.

\begin{lemma}
\label{lem:cohomologicalCB}
\begin{enumerate}
\item[(i)] The Chevalley-Bass number of $K$ is the smallest integer $\Lambda$ such that, for every integer $n>0$, the image of the inflation map
$$
\infl:H^1(G_{\Lambda{}n},\mu_{\Lambda{}n}) \to H^1(K,\mu_{\Lambda{}n})
$$
lies in the image of the map $H^1(K,\mu_{\Lambda}) \to H^1(K,\mu_{\Lambda{}n})$ induced by the inclusion $\mu_{\Lambda} \subseteq \mu_{\Lambda{}n}$.
\item[(ii)] The Chevalley-Bass number of $K$ is the smallest integer $\Lambda$ such that, for every integer $n>0$, the map $H^1(G_{\Lambda{}n},\mu_{\Lambda}) \to H^1(G_{\Lambda{}n},\mu_{\Lambda{}n})$ induced by the inclusion $\mu_{\Lambda} \subseteq \mu_{\Lambda{}n}$, is surjective.
\end{enumerate}
\end{lemma}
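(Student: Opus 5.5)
The plan is to reduce everything to Kummer theory, using the identification $H^1(K,\mu_m)\simeq K^\times/(K^\times)^m$ (Hilbert 90), and similarly $H^1(K(\zeta_m),\mu_m)\simeq K(\zeta_m)^\times/(K(\zeta_m)^\times)^m$. Fix $\Lambda$ and $n$, and put $m=\Lambda n$, $L=K(\zeta_m)$.

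\textbf{Step 1.} Identify the image of inflation. The inflation–restriction sequence
$$
0\to H^1(G_m,\mu_m)\xrightarrow{\infl} H^1(K,\mu_m)\xrightarrow{\rest} H^1(L,\mu_m)
$$
is exact, because $\mu_m$ is fixed by $\Gal(\overline{K}/L)$. Under Kummer theory, $\rest$ is the natural map $K^\times/(K^\times)^m\to L^\times/(L^\times)^m$. Hence $\img(\infl)$ is $\bigl((L^\times)^m\cap K^\times\bigr)/(K^\times)^m$.

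\textbf{Step 2.} Identify the image of $H^1(K,\mu_\Lambda)\to H^1(K,\mu_m)$. The Kummer coboundary is compatible with the commutative square
$$
\mu_\Lambda\to\overline{K}^\times\xrightarrow{x\mapsto x^\Lambda}\overline{K}^\times, \qquad \mu_m\to\overline{K}^\times\xrightarrow{x\mapsto x^m}\overline{K}^\times,
$$
where the right vertical map is $y\mapsto y^n$. It follows that the map becomes $K^\times/(K^\times)^\Lambda\to K^\times/(K^\times)^m$, $x\mapsto x^n$. Its image is $(K^\times)^n/(K^\times)^m$. The condition in (i) therefore reads $(L^\times)^m\cap K^\times\subseteq (K^\times)^n$, which is exactly \eqref{eq:chbassnumber} for this $n$. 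Since the smallest $\Lambda$ working for all $n$ is by definition the Chevalley–Bass number, (i) follows. The one point needing care is the sign and compatibility check in the connecting homomorphism, but this is routine functoriality of the long exact sequence.

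\textbf{Step 3.} Deduce (ii) from (i) via the commutative square whose horizontal arrows are inflation from $G_m$ to $K$, with $\mu_\Lambda$ coefficients in the top row and $\mu_m$ coefficients in the bottom row. Both inflations are injective, since $\Gal(\overline{K}/L)$ acts trivially on $\mu_\Lambda$ and on $\mu_m$.

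If $H^1(G_m,\mu_\Lambda)\to H^1(G_m,\mu_m)$ is surjective, then commutativity gives $\img(\infl)\subseteq\img\bigl(H^1(K,\mu_\Lambda)\to H^1(K,\mu_m)\bigr)$.

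For the converse, take $c\in H^1(G_m,\mu_m)$ and suppose $\infl(c)=\iota(d)$ with $d\in H^1(K,\mu_\Lambda)$. It suffices to show that $d$ comes from $H^1(G_m,\mu_\Lambda)$, i.e. that $\rest_L(d)=0$ in $H^1(L,\mu_\Lambda)$.

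\textbf{Step 4 (main obstacle).} The key input is that $\iota_L:H^1(L,\mu_\Lambda)\to H^1(L,\mu_m)$ is injective. The long exact sequence of $0\to\mu_\Lambda\to\mu_m\xrightarrow{\Lambda}\mu_n\to0$ over $L$ shows that $\ker\iota_L$ is the image of the coboundary from $H^0(L,\mu_n)=\mu_n$. Since $\mu_m\subseteq L$, the map $\mu_m\to\mu_n$, $x\mapsto x^\Lambda$, is surjective on $L$-points, so this coboundary vanishes. Equivalently, on the Kummer side, $x\mapsto x^n$ is injective from $L^\times/(L^\times)^\Lambda$ to $L^\times/(L^\times)^m$, because $L$ contains $\mu_m$.

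Then $\iota_L(\rest_L d)=\rest_L(\infl c)=0$, so $\rest_L d=0$, and $d$ inflates from some $d'\in H^1(G_m,\mu_\Lambda)$. The element $d'$ maps to $c$ by injectivity of inflation. This gives (ii).
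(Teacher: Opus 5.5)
Your proof is correct and follows essentially the paper's route. It uses the same two inflation–restriction rows linked by $\mu_\Lambda\subseteq\mu_{\Lambda n}$, the same injectivity of $H^1(K(\zeta_{\Lambda n}),\mu_\Lambda)\to H^1(K(\zeta_{\Lambda n}),\mu_{\Lambda n})$ for the chase between (i) and (ii), and Kummer theory to reach the defining condition. The differences are cosmetic: you identify the image of $H^1(K,\mu_\Lambda)$ directly as $(K^\times)^n/(K^\times)^{\Lambda n}$ via $x\mapsto x^n$, where the paper calls it the kernel of $H^1(K,\mu_{\Lambda n})\to H^1(K,\mu_n)$; and you get the injectivity from a vanishing coboundary, where the paper uses left exactness of $\Hom$.
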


\begin{proof}
Let $\Lambda$ and $n$ be two positive integers. Consider the commutative diagram
\begin{equation}
\label{eq:maindiagram}
\begin{tikzcd}
0 \arrow[r] 
    & H^1(G_{\Lambda{}n},\mu_{\Lambda})
    \arrow[r, "\infl"] \arrow[d]
    & H^1(K,\mu_{\Lambda})
    \arrow[r, "\rest"] \arrow[d]
    & H^1(K(\zeta_{\Lambda{}n}),\mu_{\Lambda})
    \arrow[d] \\
  0 \arrow[r] 
    & H^1(G_{\Lambda{}n},\mu_{\Lambda{}n}) 
        \arrow[r, "\infl"] 
    & H^1(K,\mu_{\Lambda{}n}) 
        \arrow[r, "\rest"] \arrow[d] 
    & H^1(K(\zeta_{\Lambda{}n}),\mu_{\Lambda{}n}) \\
  & & H^1(K,\mu_n) 
\end{tikzcd}
\end{equation}
in which the horizontal sequences are inflation-restriction sequences, and the vertical sequence in the middle is part of the long exact sequence of cohomology associated to the short exact sequence
$$
1 \to \mu_{\Lambda} \to \mu_{\Lambda{}n} \to \mu_n \to 1.
$$

We observe that the vertical right-hand side map in \eqref{eq:maindiagram} is injective, because the absolute Galois group of $K(\zeta_{\Lambda{}n})$ acts trivially on $\mu_{\Lambda{}n}$ and $\mu_{\Lambda}$, hence the $H^1$'s involved are $\Hom$'s. It then follows from diagram chase in \eqref{eq:maindiagram} that the following conditions are equivalent:
\begin{enumerate}
\item[(a)] the kernel of restriction $H^1(K,\mu_{\Lambda{}n})\to H^1(K(\zeta_{\Lambda{}n}),\mu_{\Lambda{}n})$ is contained in the kernel of the map $H^1(K,\mu_{\Lambda{}n}) \to H^1(K,\mu_n)$.
\item[(b)] the image of inflation $H^1(G_{\Lambda{}n},\mu_{\Lambda{}n}) \to H^1(K,\mu_{\Lambda{}n})$ is contained in the image of the map $H^1(K,\mu_{\Lambda}) \to H^1(K,\mu_{\Lambda{}n})$.
\item[(c)] the map $H^1(G_{\Lambda{}n},\mu_{\Lambda}) \to H^1(G_{\Lambda{}n},\mu_{\Lambda{}n})$ is surjective.
\end{enumerate}

On the other hand, for any field $F$ of characteristic zero and any positive integer $m$ we have, by Kummer theory, a canonical isomorphism
$$
H^1(F,\mu_m)\simeq F^\times/(F^\times)^m.
$$
Therefore, condition (a) is equivalent to the following condition:
\begin{enumerate}
\item[(a')] the kernel of the base-change map $K^\times/(K^\times)^{\Lambda{}n} \to K(\zeta_{\Lambda{}n})^\times/(K(\zeta_{\Lambda{}n})^\times)^{\Lambda{}n}$
is contained in the kernel of the natural map $K^\times/(K^\times)^{\Lambda{}n} \to K^\times/(K^\times)^n$.
\end{enumerate}
which, in turn, is equivalent to
\begin{enumerate}
\item[(a'')] $(K(\zeta_{\Lambda{}n})^\times)^{\Lambda{}n}\cap K^\times \subseteq (K^\times)^n$.
\end{enumerate}

To conclude, the Chevalley-Bass number of $K$ is by definition the smallest positive integer $\Lambda$ satisfying condition (a'') for all $n>0$. The equivalence with (b) proves (i), and the equivalence with (c) proves (ii). 
\end{proof}

\begin{corollary}
\label{cor:CB_is_ab}
The fields $K$ and $K_{\ab}$ have the same Chevalley-Bass number.
\end{corollary}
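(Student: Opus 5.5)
Lemma~\ref{lem:cohomologicalCB}(ii) characterises the Chevalley-Bass number purely through the groups $H^1(G_{\Lambda n},\mu_\Lambda)$ and $H^1(G_{\Lambda n},\mu_{\Lambda n})$ together with the map between them. So the plan is to show that, for every $m$, the group $G_m=\Gal(K(\zeta_m)/K)$, viewed as a subgroup of $(\Z/m\Z)^\times$ acting on $\mu_m$, is the same for $K$ and for $K_{\ab}$. Once this is shown, the cohomology groups and the maps induced by $\mu_\Lambda\subseteq\mu_{\Lambda n}$ coincide for the two fields. Hence the set of $\Lambda$ satisfying condition (ii) is the same for both, and so are the smallest such $\Lambda$.

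The key step is the following Galois-theoretic fact: for every positive integer $m$,
$$
K\cap\Q(\zeta_m)=K_{\ab}\cap\Q(\zeta_m).
$$
The inclusion $\supseteq$ is clear. For $\subseteq$, note that $K\cap\Q(\zeta_m)$ is a subfield of the abelian extension $\Q(\zeta_m)/\Q$. It is therefore an abelian extension of $\Q$ contained in $K$, hence it lies in $K_{\ab}$.

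Next, standard Galois theory for the Galois extension $\Q(\zeta_m)/\Q$ gives the following. Restriction identifies $\Gal(K(\zeta_m)/K)$ with $\Gal(\Q(\zeta_m)/K\cap\Q(\zeta_m))$, and this identification is compatible with the action on $\mu_m$. This works in characteristic $0$ for arbitrary $K$, since $K(\zeta_m)=K\cdot\Q(\zeta_m)$ and $\Q(\zeta_m)/\Q$ is finite Galois. The same holds for $K_{\ab}$. Therefore $G_m(K)$ and $G_m(K_{\ab})$ are the same subgroup of $(\Z/m\Z)^\times\simeq\Gal(\Q(\zeta_m)/\Q)$, acting on $\mu_m$ in the same way.

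Finally, apply this with $m=\Lambda n$ for every $\Lambda,n$. The $G_{\Lambda n}$-modules $\mu_\Lambda\subseteq\mu_{\Lambda n}$ are then literally identical for $K$ and $K_{\ab}$. Hence the surjectivity condition in Lemma~\ref{lem:cohomologicalCB}(ii) holds for $K$ if and only if it holds for $K_{\ab}$, which concludes the proof.

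I expect no serious obstacle. The only point needing care is that the identification of $G_m$ must respect the module structure on $\mu_m$, and not just the abstract group. This is automatic because both groups act on $\mu_m$ through their restriction to $\Q(\zeta_m)$.
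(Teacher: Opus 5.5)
Your proposal is correct and follows the paper's proof essentially step for step. You prove $K\cap\Q(\zeta_m)=K_{\ab}\cap\Q(\zeta_m)$ because that intersection is abelian over $\Q$, identify both groups $G_m$ with $\Gal(\Q(\zeta_m)/K\cap\Q(\zeta_m))$ by restriction, and conclude with Lemma~\ref{lem:cohomologicalCB}(ii). Your explicit check that this identification respects the action on $\mu_m$ spells out what the paper's ``canonical isomorphism'' of $H^1$ groups leaves implicit.
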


\begin{proof}
Let $\bar{K}$ be an algebraic closure of $K$. Given an integer $n>0$, there is (by Galois theory) a unique copy of $\Q(\zeta_n)$ in $\bar{K}$. So the intersection $\Q(\zeta_n)\cap K$ is well defined. Furthermore, since $\Q(\zeta_n)\cap K$ is a subfield of $\Q(\zeta_n)$, it is an abelian extension of $\Q$, hence it is contained in $K_{\ab}$. It follows that
$$
\Q(\zeta_n)\cap K = \Q(\zeta_n)\cap K_{\ab}.
$$

Then, by Galois theory, we have canonical isomorphisms
\begin{align*}
\Gal(K(\zeta_n)/K) &\simeq \Gal(\Q(\zeta_n)/\Q(\zeta_n)\cap K) \\
                   &= \Gal(\Q(\zeta_n)/\Q(\zeta_n)\cap K_{\ab}) \\
                   &\simeq \Gal(K_{\ab}(\zeta_n)/K_{\ab}).
\end{align*}
This yields a canonical isomorphism
$$
H^1(\Gal(K(\zeta_n)/K),\mu_n) \simeq H^1(\Gal(K_{\ab}(\zeta_n)/K_{\ab}),\mu_n)
$$
and the result follows from the cohomological characterisation of the Chevalley-Bass number (Lemma~\ref{lem:cohomologicalCB}~(ii)).
\end{proof}

To conclude the section, we characterise the $p$-adic valuation of the Chevalley-Bass number. This allows us to handle one prime at a time.

\begin{corollary}
\label{cor:prime_by_prime}
Let $\Lambda$ be the Chevalley-Bass number of $K$, and let $p$ be a prime number. Then $\ord_p(\Lambda)$ in the smallest integer $j$ such that, for every integer $n>0$, the map
\begin{equation}
\label{eq:prime_by_prime_surj}
H^1(G_{p^jn},\mu_{p^j}) \to H^1(G_{p^jn},\mu_{p^{j+\ord_p(n)}})
\end{equation}
induced by the inclusion $\mu_{p^j} \subseteq \mu_{p^{j+\ord_p(n)}}$, is surjective. 
\end{corollary}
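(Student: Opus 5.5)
The plan is to split the criterion of Lemma~\ref{lem:cohomologicalCB}~(ii) into its $p$-primary parts, and then to show that the $p$-part of the condition depends only on $\ord_p(\Lambda)$.

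\emph{Step 1: decomposition.} For a finite group $G$ and any $N$, $\mu_N=\prod_{p\mid N}\mu_{p^{\ord_p(N)}}$ as $G$-modules. Hence
$$H^1(G,\mu_N)=\bigoplus_{p} H^1(G,\mu_{p^{\ord_p N}}).$$
The map induced by $\mu_\Lambda\subseteq\mu_{\Lambda n}$ is the direct sum of the maps induced by $\mu_{p^{\ord_p\Lambda}}\subseteq\mu_{p^{\ord_p\Lambda+\ord_p n}}$. So the map $H^1(G_{\Lambda n},\mu_\Lambda)\to H^1(G_{\Lambda n},\mu_{\Lambda n})$ is surjective if and only if, for every prime $p$, the map
$$H^1(G_{\Lambda n},\mu_{p^{j}})\to H^1(G_{\Lambda n},\mu_{p^{j+\ord_p n}})$$
is surjective, where $j=\ord_p(\Lambda)$.

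\emph{Step 2: changing the group.} Write $\Lambda n=p^j n'$ with $n'=(\Lambda/p^j)n$, so that $\ord_p(n')=\ord_p(n)$. Let $C_p(j)$ denote the condition in the statement: \eqref{eq:prime_by_prime_surj} is surjective for all $n>0$. Step 1 shows that $\Lambda$ satisfies (ii) if and only if, for each $p$, surjectivity \eqref{eq:prime_by_prime_surj} holds for all $n$ that are multiples of $m_p:=\Lambda/p^{j}$. I must show that this restricted condition is equivalent to $C_p(j)$.

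One direction is trivial. For the other, I use the $p$-primary analogue of the equivalence (b)$\Leftrightarrow$(c) in the proof of Lemma~\ref{lem:cohomologicalCB}. The same diagram chase, with $\mu_{p^j}\subseteq\mu_{p^{j+k}}\to\mu_{p^k}$ and the field $K(\zeta_N)$ for any $N$ divisible by $p^{j+k}$, gives the following. Surjectivity of $H^1(G_N,\mu_{p^j})\to H^1(G_N,\mu_{p^{j+k}})$ is equivalent to the image of the injective inflation $H^1(G_N,\mu_{p^{j+k}})\to H^1(K,\mu_{p^{j+k}})$ lying in the image of $H^1(K,\mu_{p^j})$.

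Now if $N\mid N'$, the inflation from $G_N$ factors through inflation from $G_{N'}$, so its image is contained in the image for $G_{N'}$. Hence surjectivity for $N'=p^j n m_p$ implies surjectivity for $N=p^j n$, with $k=\ord_p n$ unchanged. Thus the restricted condition implies $C_p(j)$, and $\Lambda$ is valid if and only if $C_p(\ord_p\Lambda)$ holds for all $p$.

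\emph{Step 3: minimality.} The set of valid $\Lambda$ consists of the integers whose $p$-adic valuations all satisfy $C_p$. It is nonempty, since a Chevalley-Bass number exists by Chevalley-Bass. Let $j_p$ be the least $j$ with $C_p(j)$; this is $0$ for all but finitely many $p$, because some valid $\Lambda$ exists and $C_p$ holds at $\ord_p\Lambda$. Then $\prod_p p^{j_p}$ is valid and is numerically smaller than every other valid integer, since each factor is smallest. Hence $\ord_p(\Lambda)=j_p$.

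I expect the main obstacle to be Step 2: the statement uses the group $G_{p^j n}$ rather than $G_{\Lambda n}$. This is exactly where the monotonicity of inflation images in the field tower is needed.
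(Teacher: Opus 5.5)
Your proof is correct. Its skeleton matches the paper's: split the criterion of Lemma~\ref{lem:cohomologicalCB}~(ii) into $p$-primary components, transfer surjectivity from $G_{\Lambda n}$ down to $G_{p^jn}$, then argue by minimality. The transfer step, however, is done by a different mechanism.

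The paper stays inside the finite group. It uses the extension $1\to T\to G_{\Lambda n}\to G_{p^in}\to 1$, where $T$ acts trivially on $\mu_{p^{i+\ord_p(n)}}$, compares the inflation-restriction sequences for $\mu_{p^i}$ and $\mu_{p^{i+\ord_p(n)}}$, and gets $\coker(\rho)\hookrightarrow\coker(\theta)$ from the snake lemma.

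You instead rerun the diagram chase (b)$\Leftrightarrow$(c) of Lemma~\ref{lem:cohomologicalCB}, using $\Gal(\overline{K}/K)$ as a common ambient group for every level $N$ with $p^{j+k}\mid N$. This turns surjectivity at level $N$ into the condition that $\img(\infl_N)$ lies in the image of $H^1(K,\mu_{p^j})\to H^1(K,\mu_{p^{j+k}})$. That condition is visibly monotone in $N$ by transitivity of inflation.

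What your route buys is reuse of an argument already in the paper, with no need to examine the $G_{p^in}$-invariants of $H^1(T,\cdot)$. The paper asserts that the right-hand vertical map there is an equality; at that stage this is only justified when $i\geq\ord_p(\lambda)$, although only its injectivity is actually used. The paper's route keeps everything at the level of the finite groups $G_N$ and produces the cokernel injection explicitly.

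Your Step~3 is also slightly sharper. You show directly that $\prod_p p^{j_p}$ divides every admissible integer. Minimality alone gives the paper only $\Lambda\le\Lambda_0$, not the stated $\Lambda\mid\Lambda_0$; this is harmless there, since $\Lambda_0\mid\Lambda$ is proved right afterwards.
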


\begin{proof}
Let $\Lambda_0$ be the product of all $p^j$ where $p$ runs through prime numbers, and $j$ is the smallest integer satisfying the condition above.
Then for every $n>0$ we have a commutative square
\begin{equation}
\label{eq:chinese_square}
\begin{tikzcd}
H^1(G_{\Lambda_0{}n},\mu_{\Lambda_0}) \arrow[r, "\sim"] \arrow[d]
& \prod_p H^1(G_{\Lambda_0{}n}, \mu_{p^j}) \arrow[d] \\
H^1(G_{\Lambda_0{}n},\mu_{\Lambda_0{n}})  \arrow[r, "\sim"]
&  \prod_p H^1(G_{\Lambda_0{}n},\mu_{p^{j+\ord_p(n)}})
\end{tikzcd}
\end{equation}
in which the products run through all prime numbers, and horizontal maps are isomorphisms by the Chinese remainder Theorem. If the map \eqref{eq:prime_by_prime_surj} is surjective for all $n$ and $p$, then the vertical right-hand side map is surjective on each component, hence the left-hand side map is surjective for all $n$. It follows from Lemma~\ref{lem:cohomologicalCB}(ii) that $\Lambda\mid \Lambda_0$.

Conversely, fix a prime number $p$, let $i:=\ord_p(\Lambda)$, and let us write $\Lambda=rp^i$ with $p\nmid r$. Consider the diagram \eqref{eq:chinese_square} in which $\Lambda_0$ is replaced by $\Lambda$; then by Lemma~\ref{lem:cohomologicalCB}(ii) the vertical left-hand side map is surjective, hence the vertical right-hand side map is surjective on each component, in particular
$$
\theta:H^1(G_{\Lambda{}n}, \mu_{p^i}) \to H^1(G_{\Lambda{}n},\mu_{p^{i+\ord_p(n)}})
$$
is surjective. By Galois theory, we have an exact sequence
\begin{equation*}
\begin{tikzcd}
0 
  \arrow[r] 
& T
  \arrow[r] 
& G_{rp^in}(=G_{\Lambda{}n})
  \arrow[r] 
& G_{p^in}
  \arrow[r] 
& 0
\end{tikzcd}
\end{equation*}
whose kernel $T\leq (\Z/r\Z)^\times$ acts trivially on $\mu_{p^{i+\ord_p(n)}}$ (because $r$ is coprime to $p$).
The inflation-restriction exact sequence for the Galois action on $\mu_{p^i}$ (resp. $\mu_{p^{i+\ord_p(n)}}$) yields the top (resp. bottom) exact line in the commutative diagram
$$
\begin{tikzcd}
0 
  \arrow[r] 
& H^1(G_{p^in},\mu_{p^i})
  \arrow[r, "\infl_1"] 
  \arrow[d, "\rho"]
& H^1(G_{rp^in},\mu_{p^i})
    \arrow[r, "\rest_1"]
    \arrow[d, "\theta"]
& H^1(T,\mu_{p^i})^{G_{p^in}}
    \arrow[d, equal]
 \\
0 
  \arrow[r] 
& H^1(G_{p^in},\mu_{p^{i+\ord_p(n)}})
  \arrow[r, "\infl_2"]  
& H^1(G_{rp^in},\mu_{p^{i+\ord_p(n)}})
  \arrow[r, "\rest_2"]
& H^1(T,\mu_{p^{i+\ord_p(n)}})^{G_{p^in}}
\end{tikzcd}
$$
We claim that the vertical right-hand side map is an equality: since $T$ acts trivially, the cohomology groups of $T$ are hom's, and by taking their invariants under $G_{p^in}$ we obtain the same group $\Hom(T,\mu_{p^{\ord_p(\lambda)}})$. Replacing the right-hand side terms in this diagram by the images of $\rest_1$ and $\rest_2$ respectively, yields a diagram with short exact lines, and it follows from the previous discussion that the vertical map $\img(\rest_1)\to \img(\rest_2)$ is injective. Applying the snake lemma to this new diagram, we deduce that $\coker(\rho)\hookrightarrow \coker(\theta)$. Since $\theta$ is surjective by assumption, $\rho$ is also surjective, hence $j\leq i$ since $j$ is the smallest integer satisfying this property. This proves that $\Lambda_0\mid \Lambda$, hence the result.
\end{proof}

%%%%%%%%%%%%%%%%%%%%%%%%%%%%%%%%%%%%%%%%%%%%%

\subsection{The subgroups $\Omega_p^{k,\nu}$ and their cohomology}
\label{sub:Omega}

Let $1\leq k \leq \nu$ be two integers, and let $p$ be a prime number. Let $\Omega_p^{k,\nu}$ be the subgroup of $(\Z/p^\nu\Z)^\times$ defined by
\begin{align*}
\Omega_p^{k,\nu} &:=\ker\left((\Z/p^\nu\Z)^\times \to (\Z/p^k\Z)^\times\right) \\
                 &=\{1+\beta p^k ~|~ \beta \in (\Z/p^\nu\Z)\}.
\end{align*}

We shall make extensive use of the following well-known fact: when $p$ is odd, or when $p=2$ and $k\geq 2$, the group $\Omega_p^{k,\nu}$ is cyclic of order $p^{\nu-k}$, generated by $1+p^k$.

In particular, if $p$ is odd then we have an isomorphism
$$
(\Z/p^\nu\Z)^\times \simeq \Omega_p^{1,\nu} \times (\Z/p\Z)^\times.
$$
When $p=2$ and $\nu>2$, we have an isomorphism
$$
(\Z/2^\nu\Z)^\times \simeq \Omega_2^{2,\nu} \times (\Z/4\Z)^\times.
$$

\begin{lemma}
\label{lem:2b}
Assume that $p$ is odd, or that $p=2$ and $k\geq 2$. Then
$$
H^1(\Omega_p^{k,\nu},\Z/p^\nu\Z) = 0 = H^2(\Omega_p^{k,\nu},\Z/p^\nu\Z)
$$
where $\Omega_p^{k,\nu}$ acts on $\Z/p^\nu\Z$ by multiplication.
\end{lemma}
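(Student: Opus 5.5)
Since, under the hypothesis, $\Omega:=\Omega_p^{k,\nu}$ is cyclic of order $p^{\nu-k}$ generated by $\sigma:=1+p^k$, I will use the standard description of the cohomology of a cyclic group. For a cyclic group $\langle\sigma\rangle$ of order $N$ acting on a module $M$, put $N_\sigma:=1+\sigma+\cdots+\sigma^{N-1}$. Then
$$
H^1(\Omega,M)\simeq \ker(N_\sigma)/(\sigma-1)M, \qquad H^2(\Omega,M)\simeq M^{\Omega}/N_\sigma M.
$$
Here $M=\Z/p^\nu\Z$, and $\sigma$ acts by multiplication by $s:=1+p^k$. So $\sigma-1$ is multiplication by $p^k$, and $N_\sigma$ is multiplication by
$$
S:=\sum_{i=0}^{N-1}s^i=\frac{s^N-1}{s-1}, \qquad N=p^{\nu-k},
$$
where the quotient is taken in $\Z$ before reducing.

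The key step is to compute $\ord_p(S)$ as an integer. I claim $\ord_p(s^{N}-1)=\nu$ exactly, so that $\ord_p(S)=\nu-k$. For $p$ odd, or for $p=2$ with $k\ge2$, the lifting-the-exponent lemma gives $\ord_p\bigl((1+p^k)^{p^m}-1\bigr)=k+m$. Equivalently, this is the fact already used in the paper that $1+p^k$ has order exactly $p^{\nu-k}$ modulo $p^\nu$, applied with varying $\nu$. Taking $m=\nu-k$ gives the claim. Thus $S=p^{\nu-k}u$ with $u$ a $p$-adic unit. This is the only step where the hypothesis ($p$ odd, or $k\ge 2$) enters, and I expect it to be the main point to check carefully.

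With this in hand, both groups are computed inside the cyclic group $\Z/p^\nu\Z$.

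For $H^1$: multiplication by $S=p^{\nu-k}u$ on $\Z/p^\nu\Z$ has kernel $p^{k}\Z/p^\nu\Z$. Meanwhile $(\sigma-1)M=p^kM$ is the same subgroup, so $H^1=0$.

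For $H^2$: the invariants $M^\Omega$ are the elements killed by $p^k$, namely $p^{\nu-k}\Z/p^\nu\Z$. The norm image is $N_\sigma M=SM=p^{\nu-k}M$, which is the same subgroup, so $H^2=0$.

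The case $k=\nu$ is trivial, since then $\Omega$ is the trivial group.
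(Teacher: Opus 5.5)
Your proposal is correct and follows the paper's proof essentially step for step. Both use the cyclic-group formulas for $H^1$ and $H^2$, note that $\sigma-1$ acts as multiplication by $p^k$ and the norm as multiplication by $up^{\nu-k}$ with $p\nmid u$ (the paper states your lifting-the-exponent step as $(1+p^k)^{p^{\nu-k}}=1+up^\nu$), and then identify $\ker(\norm)$ with $p^k\Z/p^\nu\Z$ and $M^{\Omega}$ with the norm image $p^{\nu-k}\Z/p^\nu\Z$.
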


\begin{proof}
As recalled above, under these assumptions the group $\Omega_p^{k,\nu}$ is cyclic of order $p^{\nu-k}$, generated by the element
$$
\tau_p := 1+p^k.
$$
Therefore, one has
$$
H^1(\Omega_p^{k,\nu},\Z/p^\nu\Z) = \ker(\norm)/\img(\tau_p-1)
$$
where $\norm$ is the norm morphism, obtained by summing all powers of $\tau_p$. Since $\tau_p=1+p^k$, the image of $\tau_p-1$ is just the subgroup $p^k\Z/p^\nu\Z$.

On the other hand, the norm $\norm$ is multiplication by the integer
$$
\sum_{i=0}^{p^{\nu-k}-1} (1+p^k)^i = \frac{1-(1+p^k)^{p^{\nu-k}}}{1-(1+p^k)}.
$$
But one has
$$
(1+p^k)^{p^{\nu-k}} = 1 + up^\nu
$$
for some integer $u$ coprime to $p$. Hence the norm $\norm$ is multiplication by $up^{\nu-k}$, whose kernel is $p^k\Z/p^\nu\Z$. The vanishing of the $H^1$ follows.

Turning our attention towards the $H^2$, we have
$$
H^2(\Omega_p^{k,\nu},\Z/p^\nu\Z) = (\Z/p^\nu\Z)^{\Omega_p^{k,\nu}}/\img(\norm).
$$

It follows from previous computation that the image of $\norm$ is $p^{\nu-k}\Z/p^\nu\Z$. Since the fixed points of $\Omega_p^{k,\nu}$ are the $p^k$-torsion points of $\Z/p^\nu\Z$, these two subgroups agree, hence the result.
\end{proof}

%%%%%%%%%%%%%%%%%%%%%%%%%%%%%%%%%%%%%%%%%%%%%

\subsection{The prime power obstruction}
\label{sub:primepowerobst}

According to Corollary~\ref{cor:CB_is_ab}, in order to prove Theorem~\ref{thm:main} one may replace $K$ by $K_{\ab}$.
Although we don't follow this path, the reader may assume now, until the end of the paper, that $K=K_{\ab}$ is a finite abelian extension of $\Q$.

Recall that the conductor of $K_{\ab}$ is the smallest integer $f$ such that $K_{\ab}\subseteq \Q(\zeta_f)$. We do not consider the place at infinity here.

\begin{lemma}
\label{lem:1}
Let $p\nmid \lambda$, then for all $n$ we have
$$
H^1(G_n,\mu_{p^{\ord_p(n)}}) = 0.
$$
\end{lemma}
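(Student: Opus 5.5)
Write $n=p^\nu m$ with $p\nmid m$ and $\nu=\ord_p(n)$. If $\nu=0$ the coefficient group is trivial, so we may assume $\nu\geq 1$. The plan is to locate inside $G_n$ a normal subgroup acting nontrivially on $\mu_{p^\nu}$ whose cohomology with these coefficients vanishes, and then use inflation--restriction.

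First we use the hypothesis $p\nmid\lambda$, which says $\zeta_p\notin K$. The image of $G_n$ in $(\Z/p^\nu\Z)^\times$ is $G_{p^\nu}=\Gal(K(\zeta_{p^\nu})/K)$. Since $\zeta_p\notin K$, the image of $G_{p^\nu}$ under reduction to $(\Z/p\Z)^\times$ (resp. $(\Z/4\Z)^\times$ when $p=2$) is nontrivial.

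We treat $p$ odd first. Let $H\leq G_n$ be the subgroup of elements acting on $\mu_{p^\nu}$ through the prime-to-$p$ part of $(\Z/p^\nu\Z)^\times\simeq\Omega_p^{1,\nu}\times(\Z/p\Z)^\times$. Concretely, $H$ is the subgroup of elements of $G_n$ whose order is prime to $p$, or more conveniently the prime-to-$p$ Hall subgroup of the abelian group $G_n$. Its image in $(\Z/p^\nu\Z)^\times$ lands in the cyclic group of order $p-1$ and maps isomorphically onto the image of $G_n$ in $(\Z/p\Z)^\times$, which is nontrivial. Hence $H$ contains an element $\sigma$ acting on $\mu_{p^\nu}$ by a scalar $a$ with $a\not\equiv 1 \pmod p$, so $a-1$ is a unit.

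Two vanishing facts now follow.
\begin{itemize}
\item Since $|H|$ is prime to $p$, we have $H^i(H,\mu_{p^\nu})=0$ for $i\geq1$.
\item $\mu_{p^\nu}^H=0$, because $\sigma-1$ acts invertibly.
\end{itemize}
The inflation--restriction sequence
$$
0\to H^1(G_n/H,\mu_{p^\nu}^H)\to H^1(G_n,\mu_{p^\nu})\to H^1(H,\mu_{p^\nu})
$$
then has both outer terms zero, which gives the result.

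For $p=2$, the hypothesis $2\nmid\lambda$ is impossible since $-1\in K$, so $\lambda$ is always even. The statement is therefore vacuous for $p=2$.

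A simpler uniform alternative avoids Hall subgroups: take $\sigma\in G_n$ acting on $\mu_{p^\nu}$ by $a$ with $a\not\equiv1\pmod p$, and use that $G_n$ is abelian. For a cocycle $c$ we have
$$
c(\sigma\tau)=c(\tau\sigma),
$$
which gives $(a-1)c(\tau)=(\tau-1)c(\sigma)$. Since $a-1$ is invertible, $c$ is the coboundary of $(a-1)^{-1}c(\sigma)$. I would present this one-line argument; it needs no further lemma.

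The main point to check is that such a $\sigma$ exists, i.e. that $G_n$ surjects onto $G_{p^\nu}$ and that $G_{p^\nu}$ has nontrivial image mod $p$ when $\zeta_p\notin K$. The surjection is restriction from $K(\zeta_n)$ to $K(\zeta_{p^\nu})$. For the second claim, $\Gal(K(\zeta_{p^\nu})/K(\zeta_p))$ is exactly the kernel of reduction mod $p$, and $K(\zeta_p)\neq K$.
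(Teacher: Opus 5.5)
Your proposal is correct, and the argument you say you would present differs from the paper's. Your first, Hall-subgroup argument is essentially the paper's proof. The paper lifts a generator of $\Gal(K(\zeta_p)/K)$ to an element $\sigma\in G_n$ of order prime to $p$. It then applies inflation--restriction to $\langle\sigma\rangle$ rather than to the whole prime-to-$p$ part of $G_n$, and the outer terms vanish for the reasons you give.

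Your two descriptions of $H$ do not agree, though. The set of elements acting through the prime-to-$p$ factor of $(\Z/p^\nu\Z)^\times$ contains all of $\Gal(K(\zeta_n)/K(\zeta_{p^\nu}))$, which can have elements of $p$-power order. So the step ``$|H|$ is prime to $p$'' needs the Hall-subgroup description.

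The cocycle computation you finally choose is Sah's lemma: for a central $\sigma$ acting on $\mu_{p^\nu}$ by a scalar $a$, the group $H^1$ is killed by $a-1$. The paper only invokes this later, in the proof of Lemma~\ref{lem:exponent}. It is more economical than the paper's route. It needs neither the splitting of the abelian group $G_n$ into its $p$-part and prime-to-$p$ part (to get a lift of prime-to-$p$ order) nor the inflation--restriction sequence. It only needs \emph{some} $\sigma\in G_n$ with $a\not\equiv 1\pmod p$, which you correctly extract from $K(\zeta_p)\neq K$ and the surjection $G_n\twoheadrightarrow G_{p^\nu}$.

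Your route also puts Lemma~\ref{lem:1} and the exponent bound of Lemma~\ref{lem:exponent} under one mechanism: a unit $a-1$ when $p\nmid\lambda$, and an $a-1$ of $p$-adic valuation $\ord_p(\lambda)$ when $p\mid\lambda$. The paper's coprime-order argument instead follows the same inflation--restriction pattern as Lemmas~\ref{lem:2} and~\ref{lem:3}.
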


\begin{proof}
We may assume that $p\mid n$. Since $\lambda$ is even, $p$ is odd and so $p-1\geq 2$. Since $p\nmid \lambda$, the extension $K(\zeta_p)/K$ is non-trivial of degree dividing $p-1$. By Galois theory, we have a surjective morphism
$$
\begin{tikzcd} 
G_n \arrow[r, twoheadrightarrow] & \Gal(K(\zeta_p)/K).
\end{tikzcd}
$$

Let $\sigma_0$ be a generator of $\Gal(K(\zeta_p)/K)$, then $\sigma_0$ is nontrivial of order coprime to $p$. Since any finite abelian group is the direct product of its $p$-part and its prime-to-$p$ part, one can lift $\sigma_0$ into an element $\sigma$ of $G_n$ which has also order coprime to $p$. Then by construction $\sigma$ does not fixes $\zeta_p$, in particular
$$
(\mu_{p^{\ord_p(n)}})^{\langle\sigma\rangle} = \{1\}.
$$
Thus, the inflation-restriction sequence yields an exact sequence
\[
\begin{tikzcd}
0 
  \arrow[r] 
& H^1(G_n/\langle\sigma\rangle, (\mu_{p^{\ord_p(n)}})^{\langle\sigma\rangle}) 
  \arrow[r] 
& H^1(G_n,\mu_{p^{\ord_p(n)}}) 
  \arrow[r] 
& H^1(\langle\sigma\rangle,\mu_{p^{\ord_p(n)}})^{G_n/\langle\sigma\rangle}
\end{tikzcd}
\]
whose first term is trivial as we have seen. The third term is also trivial since $\sigma$ has order coprime to $p$. The result follows.
\end{proof}

\begin{lemma}
\label{lem:2}
Let $p\mid \lambda$. Assume that $p$ is an odd prime, or that $p=2$ and $\zeta_4\in K$.
Let $f$ be the conductor of $K_{\ab}/\Q$, and let $n$ be any integer such that $\ord_p(n)\geq \ord_p(\lambda)$.

Let $H_p:=\Gal(K(\zeta_n)/K(\zeta_{n/p^{\ord_p(n)}}))$, then the inflation map yields an isomorphism
$$
\infl : H^1(G_n/H_p, \mu_{p^k}) \simeq H^1(G_n,\mu_{p^{\ord_p(n)}})
$$
for some integer $\ord_p(\lambda) \leq k \leq \ord_p(f)$. When $n$ is a multiple of $f$, one has $k=\ord_p(f)$.
\end{lemma}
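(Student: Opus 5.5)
Write $n=p^\nu m$ with $\nu=\ord_p(n)$ and $p\nmid m$, and put $a:=\ord_p(\lambda)$, so that $a\leq\nu$. By the hypothesis, $a\geq 1$ if $p$ is odd and $a\geq 2$ if $p=2$. The plan is to identify $H_p$ with one of the groups $\Omega_p^{k,\nu}$ of \S\ref{sub:Omega}, kill the restriction term with Lemma~\ref{lem:2b}, and read off the fixed points of $H_p$ on $\mu_{p^\nu}$.

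\emph{Step 1: locate $H_p$.} Via the Chinese remainder decomposition $(\Z/n\Z)^\times\simeq(\Z/m\Z)^\times\times(\Z/p^\nu\Z)^\times$, the subgroup $H_p$ consists of the elements of $G_n$ fixing $\zeta_m$. So $H_p$ embeds into $\{1\}\times(\Z/p^\nu\Z)^\times$ and acts on $\mu_{p^\nu}$ through this factor. Since $\zeta_{p^a}\in K$, every element of $H_p$ fixes $\zeta_{p^a}$, so $H_p\leq\Omega_p^{a,\nu}$. Under the hypothesis ($p$ odd, or $p=2$ with $a\geq 2$), $\Omega_p^{a,\nu}$ is cyclic of order $p^{\nu-a}$. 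Its subgroups are therefore exactly the $\Omega_p^{k,\nu}$ with $a\leq k\leq\nu$, and hence $H_p=\Omega_p^{k,\nu}$ for a unique such $k$.

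\emph{Step 2: the isomorphism.} Consider the inflation-restriction sequence
$$
0\to H^1(G_n/H_p,\mu_{p^\nu}^{H_p})\to H^1(G_n,\mu_{p^\nu})\to H^1(H_p,\mu_{p^\nu}).
$$
As an $H_p$-module, $\mu_{p^\nu}$ is $\Z/p^\nu\Z$ with $\Omega_p^{k,\nu}$ acting by multiplication. Lemma~\ref{lem:2b} then gives $H^1(H_p,\mu_{p^\nu})=0$, so inflation is an isomorphism. The fixed points $\mu_{p^\nu}^{\Omega_p^{k,\nu}}$ are the elements killed by $p^k$, that is $\mu_{p^k}$. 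This yields the displayed isomorphism with $k\geq a=\ord_p(\lambda)$.

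\emph{Step 3: the bounds involving $f$, which I expect to be the delicate part.} By Galois theory, $G_n\simeq\Gal(\Q(\zeta_n)/\Q(\zeta_n)\cap K)$, as in the proof of Corollary~\ref{cor:CB_is_ab}, and $\Q(\zeta_n)\cap K=\Q(\zeta_n)\cap K_{\ab}$. Put $e:=\ord_p(f)$. The fixed field of $\Omega_p^{k,\nu}=H_p$ is $K(\zeta_m)$, and it contains $\zeta_{p^k}$. Equivalently, $\zeta_{p^k}$ lies in the compositum $(\Q(\zeta_n)\cap K_{\ab})(\zeta_m)$, which is contained in $\Q(\zeta_{\lcm(f,m)})$. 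Intersecting with $\Q(\zeta_{p^\nu})$ gives $\Q(\zeta_{p^k})\subseteq\Q(\zeta_{p^{e}})$, hence $k\leq e$. For $p=2$ this uses $k\geq 2$, so that $\Q(\zeta_{2^k})$ genuinely has conductor $2^k$.

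For the equality when $f\mid n$, I will show $\Omega_p^{e,\nu}\leq H_p$. Let $u\equiv 1\bmod p^e$, and consider the element $(1,u)$ of $\Gal(\Q(\zeta_n)/\Q)$. It fixes $\zeta_{f}$, because $f\mid p^e m$. Hence it fixes $\Q(\zeta_f)\supseteq K_{\ab}$, so it lies in $G_n$, and it fixes $\zeta_m$, so it lies in $H_p$. Thus $H_p\supseteq\Omega_p^{e,\nu}$, which forces $k\leq e$ to be an equality.

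The main care needed in this step is keeping track of the identification of $G_n$ inside $(\Z/n\Z)^\times$ and of the degenerate case $p=2$; both are handled by the assumption $\zeta_4\in K$.
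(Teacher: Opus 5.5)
Your Steps 1 and 2, and the bound $k\le e$ in Step 3, are correct and follow essentially the paper's route. The gap is in the equality for $f\mid n$, where you prove the wrong inclusion. The groups $\Omega_p^{k,\nu}$ shrink as $k$ grows, so $\Omega_p^{e,\nu}\le H_p=\Omega_p^{k,\nu}$ is equivalent to $k\le e$. Put differently, a larger $H_p$ has fewer fixed points, so this inclusion only gives $\mu_{p^k}\subseteq\mu_{p^e}$. You have re-proved the upper bound, not the equality.

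A symptom of this is that this part of your argument never uses that $f$ is the \emph{smallest} integer with $K_{\ab}\subseteq\Q(\zeta_f)$. It applies verbatim with $f$ replaced by $pf$ (for $n$ divisible by $pf$). It would then ``prove'' $k=e+1$, which contradicts $k\le e$.

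What you need is the reverse inclusion $H_p\le\Omega_p^{e,\nu}$, i.e.\ $\zeta_{p^e}\in K(\zeta_m)$. This is where minimality of the conductor must enter. The paper obtains it from the identity $K(\zeta_{m_f})=\Q(\zeta_f)$, where $m_f$ is the prime-to-$p$ part of $f$.

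Within your setup the argument is short. Suppose $k<e$. Then $H_p=\Omega_p^{k,\nu}\supseteq\Omega_p^{e-1,\nu}$. Hence the whole group $\{1\}\times\Omega_p^{e-1,\nu}=\ker\bigl((\Z/n\Z)^\times\to(\Z/p^{e-1}m\Z)^\times\bigr)$ lies in $G_n=\Gal(\Q(\zeta_n)/K_{\ab})$; here $f\mid n$ gives $\Q(\zeta_n)\cap K=K_{\ab}$. Taking fixed fields, and using $f=p^em_f$ with $m_f\mid m$, you get $K_{\ab}\subseteq\Q(\zeta_{p^{e-1}m})\cap\Q(\zeta_f)=\Q(\zeta_{f/p})$. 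This contradicts the definition of $f$.

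This step genuinely needs $H_p$ to be the \emph{full} group $\Omega_p^{k,\nu}$, not just some subgroup of $\Omega_p^{e-1,\nu}$'s overgroup. So the cyclicity coming from $p\mid\lambda$ (and from $\zeta_4\in K$ when $p=2$) is used a second time here.
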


\begin{proof}
According to the proof of Corollary~\ref{cor:CB_is_ab} we may replace $K$ by $K_{\ab}$, which is a number field by assumption. To ease notation we assume, until the end of the current proof, that $K=K_{\ab}$.

We claim that
\begin{equation}
\label{eq:fixed_submodule}
(\mu_{p^{\ord_p(n)}})^{H_p} = \mu_{p^k}
\end{equation}
for some $k\leq \ord_p(f)$. The fixed field of $H_p$ is $K(\zeta_{n/p^{\ord_p(n)}})$, which is an extension of $K$ unramified above $p$. In particular, the conductor of $K(\zeta_{n/p^{\ord_p(n)}})/\Q$ has the same $p$-adic valuation as the conductor of $K/\Q$, which is $f$ by definition. It follows that $K(\zeta_{n/p^{\ord_p(n)}})$ cannot contain $p^k$th roots of unity with $k>\ord_p(f)$, hence the claim.

Next we claim that $k=\ord_p(f)$ when $n$ is a multiple of $f$. Let us write
$$
f=p^{\ord_p(f)}m
$$
with $m$ coprime to $p$. Then we have
\begin{equation}
\label{eq:cyclotomic_agreement}
K(\zeta_m) = \Q(\zeta_f).
\end{equation}

Indeed, $K\subseteq \Q(\zeta_f)$ by definition of the conductor, and since $p\mid \lambda$ we have
$$
\Q(\zeta_{pm})\subseteq K(\zeta_m)\subseteq \Q(\zeta_f).
$$

It follows from Galois theory that
$$
\Gal\left(\Q(\zeta_f)/\Q(\zeta_{pm})\right)=\Omega_p^{1,\ord_p(f)}
$$
which is none other than the wild inertia group of any given prime of $\Q(\zeta_f)$ lying above $p$. Since $K(\zeta_m)$ and $\Q(\zeta_f)$ have the same conductor, their wild inertia subgroups agree, and it follows that $\Q(\zeta_f)/K(\zeta_m)$ is the trivial extension, which proves \eqref{eq:cyclotomic_agreement}.

A consequence of \eqref{eq:cyclotomic_agreement} is that $\zeta_{p^{\ord_p(f)}}$ belongs $\Q(\zeta_f)=K(\zeta_m)$, and it follows from the previous discussion that $k=\ord_p(f)$ when one takes $n=f$. More generally, if $n$ is a multiple of $f$ then one can write $n=p^{\ord_p(n)}N$ with $\ord_p(n)\geq \ord_p(f)$ and $m\mid N$; then $\Q(\zeta_f)$ is a subfield of $K(\zeta_N)$, hence we also have that $k=\ord_p(f)$ in this case.

In view of \eqref{eq:fixed_submodule}, the inflation-restriction sequence yields an exact sequence
\[
\begin{tikzcd}
0 
  \arrow[r] 
& H^1(G_n/H_p, \mu_{p^k}) 
  \arrow[r] 
& H^1(G_n,\mu_{p^{\ord_p(n)}}) 
  \arrow[r] 
& H^1(H_p,\mu_{p^{\ord_p(n)}})^{G_n/H_p}
\end{tikzcd}
\]
We shall prove that the third term is trivial. We have
\begin{align*}
H_p &\simeq \Gal\left(\Q(\zeta_n)/\Q(\zeta_n)\cap K(\zeta_{n/p^{\ord_p(n)}})\right) \\
    &\subseteq \Gal\left(\Q(\zeta_n)/\Q(\zeta_{n/p^{\ord_p(n)-\ord_p(\lambda)}})\right) \\
    &\simeq\Gal(\Q(\zeta_{p^{\ord_p(n)}})/\Q(\zeta_{p^{\ord_p(\lambda)}})) 
\end{align*}
where the inclusion follows from the fact that $\zeta_{p^{\ord_p(\lambda)}}$ belongs to $K$. By Galois theory, we have
$$
\Gal\left(\Q(\zeta_{p^{\ord_p(n)}})/\Q(\zeta_{p^{\ord_p(\lambda)}})\right) = \Omega_p^{\ord_p(\lambda),\ord_p(n)}
$$
with its natural action on $\mu_{p^{\ord_p(n)}}$ by exponentiation.

If $p$ is odd, or if $p=2$ and $\zeta_4\in K$, this group $\Omega_p^{\ord_p(\lambda),\ord_p(n)}$ is cyclic of order $p^{\ord_p(n)-\ord_p(\lambda)}$, and therefore has, for each $\ell\geq \ord_p(\lambda)$, a unique subgroup of order $p^{\ord_p(n)-\ell}$ which is none other than $\Omega_p^{\ell,\ord_p(n)}$
(see \S{}\ref{sub:Omega}). It follows that $H_p=\Omega_p^{\ell,\ord_p(n)}$ for some $\ell\geq \ord_p(\lambda)$, hence $H^1(H_p,\mu_{p^{\ord_p(n)}})$ is trivial by Lemma~\ref{lem:2b}.
We note for the record that
$$
\Q(\zeta_n)\cap K(\zeta_{n/p^{\ord_p(n)}}) = \Q(\zeta_{n/p^{\ord_p(n)-\ell}})
$$
hence $\ell=k$ where $k$ is the integer defined previously.
\end{proof}

\begin{lemma}
\label{lem:3}
Let $p=2$ and assume that $\zeta_4\notin K$.
Let $f$ be the conductor of $K_{\ab}/\Q$.
Let $n$ be an integer with $\ord_2(n)\geq 2$, and let $H_4:=\Gal(K(\zeta_n)/K(\zeta_{n/2^{\ord_2(n)}},\zeta_4)$. Then the inflation map yields an isomorphism
$$
\infl : H^1(G_n/H_4, \mu_{2^k}) \simeq H^1(G_n,\mu_{2^{\ord_2(n)}})
$$
for some integer $2\leq k \leq \max(2,\ord_2(f))$. When $n$ is a multiple of $f$, one has $k=\max(2,\ord_2(f))$.
\end{lemma}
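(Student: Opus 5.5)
I will follow the proof of Lemma~\ref{lem:2}, replacing the prime-to-$p$ part $n/2^{\ord_2(n)}$ by the field obtained after adjoining $\zeta_4$. As in Corollary~\ref{cor:CB_is_ab}, I first reduce to the case $K=K_{\ab}$. Write $n=2^{\ord_2(n)}N$ with $N$ odd, and let $L:=K(\zeta_N,\zeta_4)$, which is the fixed field of $H_4$. Since $\zeta_4\notin K$ and $\lambda$ is even, $\ord_2(\lambda)=1$.

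\textbf{Step 1: the fixed submodule.} I will show that $(\mu_{2^{\ord_2(n)}})^{H_4}=\mu_{2^k}$, where $2^k$ is the largest power of $2$ whose roots of unity lie in $L$. Since $\zeta_4\in L$, we have $k\geq 2$. The extension $L/K$ is obtained by adjoining $\zeta_N$ (unramified at $2$) and $\zeta_4$, so the $2$-part of the conductor of $L$ is at most $\max(4,2^{\ord_2(f)})$. A field containing $\zeta_{2^k}$ has conductor divisible by $2^k$, hence $k\leq\max(2,\ord_2(f))$.

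When $f\mid n$, write $f=2^{\ord_2(f)}m$ with $m$ odd, so $m\mid N$. If $\ord_2(f)\leq 2$, then $k=2$ and there is nothing to prove. Otherwise I argue as for \eqref{eq:cyclotomic_agreement}, but with $\Q(\zeta_{4m})\subseteq K(\zeta_m,\zeta_4)\subseteq\Q(\zeta_f)$ replacing $\Q(\zeta_{pm})\subseteq K(\zeta_m)$. The group $\Gal(\Q(\zeta_f)/\Q(\zeta_{4m}))\simeq\Omega_2^{2,\ord_2(f)}$ is the wild inertia group at $2$. The field $K(\zeta_m,\zeta_4)$ has the same conductor $f$, since $4\mid f$ in this case. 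Hence it has the same wild inertia at $2$, which gives $K(\zeta_m,\zeta_4)=\Q(\zeta_f)$. Therefore $\zeta_{2^{\ord_2(f)}}\in L$ and $k=\ord_2(f)$.

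\textbf{Step 2: inflation--restriction.} Using Step 1, the inflation--restriction sequence reads
$0\to H^1(G_n/H_4,\mu_{2^k})\to H^1(G_n,\mu_{2^{\ord_2(n)}})\to H^1(H_4,\mu_{2^{\ord_2(n)}})$.
It remains to show that the last group vanishes. Since $H_4$ fixes $\zeta_N$ and $\zeta_4$, it embeds into $\Gal(\Q(\zeta_n)/\Q(\zeta_{4N}))\simeq\Omega_2^{2,\ord_2(n)}$, with the natural action on $\mu_{2^{\ord_2(n)}}$. This group is cyclic, and its subgroups are exactly the $\Omega_2^{\ell,\ord_2(n)}$ with $\ell\geq 2$. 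So $H_4=\Omega_2^{\ell,\ord_2(n)}$ for some $\ell\geq 2$, and Lemma~\ref{lem:2b} with $k=\ell\geq 2$ gives $H^1(H_4,\mu_{2^{\ord_2(n)}})=0$. Hence inflation is an isomorphism.

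\textbf{Main obstacle.} The delicate point is the equality case $k=\max(2,\ord_2(f))$. The conductor and wild inertia comparison must be carried out for $K(\zeta_m,\zeta_4)$ rather than for $K(\zeta_m)$. This needs $4\mid f$, which holds exactly when $\ord_2(f)\geq 2$, so that $\Q(\zeta_{4m})\subseteq\Q(\zeta_f)$. The remaining cases $\ord_2(f)\in\{0,1\}$ are handled separately by the trivial bound $k=2$. (In fact $\ord_2(f)=1$ cannot occur for a conductor, but the argument does not need this.)
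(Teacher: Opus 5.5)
Your proposal is correct and follows the paper's route. The paper's proof is only a sketch: rerun Lemma~\ref{lem:2} with $\zeta_4$ adjoined, embed $H_4$ into the cyclic group $\Omega_2^{2,\ord_2(n)}$, and apply Lemma~\ref{lem:2b}; you write out exactly these steps, including the analogue $K(\zeta_m,\zeta_4)=\Q(\zeta_f)$ of \eqref{eq:cyclotomic_agreement} for the equality case.

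There are two small slips, neither affecting the conclusion. First, your $k$ must be capped at $\ord_2(n)$: for $K=\Q(\sqrt{2})$ and $n=4$ one has $\zeta_8\in L$ but $(\mu_4)^{H_4}=\mu_4$. Second, for $p=2$ the group $\Omega_2^{2,\ord_2(f)}$ is only an index-$2$ subgroup of the wild inertia group $(\Z/2^{\ord_2(f)}\Z)^\times$, not the whole of it. The equality case is better argued directly: $\Gal(\Q(\zeta_f)/K(\zeta_m,\zeta_4))$ lies in the cyclic group $\Omega_2^{2,\ord_2(f)}$, hence equals some $\Omega_2^{j,\ord_2(f)}$. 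Then $K(\zeta_m,\zeta_4)=\Q(\zeta_{2^jm})$ has conductor $2^jm$, which forces $j=\ord_2(f)$.
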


\begin{proof}
Following the main steps of the proof of Lemma~\ref{lem:2}, and with the same notation, one checks that $H_4$ is isomorphic to a subgroup of $\Omega_2^{2,\ord_2(n)}$, hence is of the form $\Omega_2^{k,\ord_2(n)}$ for some $2\leq k \leq \max(2,\ord_2(f))$.
The result then follows from the inflation-restriction exact sequence, combined with the vanishing of $H^1(\Omega_2^{k,\ord_2(n)},\Z/2^{\ord_2(n)}\Z)$ granted by Lemma~\ref{lem:2b}.
\end{proof}

\begin{lemma}
\label{lem:borne_inf}
Let $p\mid \lambda$. Assume that $p$ is odd, or that $p=2$ and $4\mid f$. Then we have
$$
p^{\ord_p(f)-\ord_p(\lambda)} \mid \Lambda.
$$
When $\zeta_4\notin K$, we also have that $4 \mid \Lambda$.
\end{lemma}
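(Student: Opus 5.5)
Write $i:=\ord_p(\Lambda)$ and $a:=\ord_p(\lambda)$. By Corollary~\ref{cor:prime_by_prime}, for every $n>0$ the map
$$
\rho_n: H^1(G_{p^in},\mu_{p^i}) \to H^1(G_{p^in},\mu_{p^{i+\ord_p(n)}})
$$
is surjective. We shall pick $n$ so that the target is large and the source is small, and compare exponents.

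\emph{The target.} Take $n=f p^{N}$ with $N$ large, so that $p^in$ is a multiple of $f$ and $\ord_p(p^in)\geq \ord_p(f)$. Set $k=\ord_p(f)$ if $p$ is odd or $\zeta_4\in K$, and $k=\max(2,\ord_2(f))$ otherwise. By Lemma~\ref{lem:2}, resp. Lemma~\ref{lem:3}, the target is isomorphic via inflation to $H^1(Q,\mu_{p^k})$, where $Q=G_{p^in}/H_p$ (resp. $G_{p^in}/H_4$). The key claim is that this group contains an element of order at least $p^{k-a}$ (resp. $4$ in the case $\zeta_4\notin K$).

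To prove the claim, note that $Q\simeq\Gal(K(\zeta_{m'})/K)$ for the prime-to-$p$ part $m'$ of $p^in$ (adjoining $\zeta_4$ in the $p=2$ case). Inside $Q$ the stabiliser of $\zeta_{p^a}$ is all of $Q$, while $Q$ acts on $\mu_{p^k}$ through a quotient that surjects onto $\Gal(K(\zeta_{p^k})/K)$, since $K(\zeta_{m'})\supseteq\Q(\zeta_f)\ni\zeta_{p^k}$. In the cyclic case ($p$ odd or $\zeta_4\in K$) this image is $\Omega_p^{a,k}$, cyclic of order $p^{k-a}$.
\begin{itemize}
\item For $p$ odd or $\zeta_4\in K$: I would construct a crossed homomorphism by choosing a generator $\sigma$ of the image acting as $1+p^a u$ and setting $c(\sigma)=\zeta_{p^k}$. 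I would then compute its class using a cyclic quotient of $Q$ onto $\Omega_p^{a,k}$, via inflation from that quotient. For a cyclic group $C=\langle\sigma\rangle$ of order $p^{k-a}$ acting on $\Z/p^k$ by $1+p^a$, the computation is like Lemma~\ref{lem:2b}: the kernel of the norm is $p^{k-a}$-torsion-related, while $\img(\sigma-1)=p^a\Z/p^k$. This gives $H^1(C,\Z/p^k)\simeq\Z/p^{a}$ when $k\geq 2a$, and in general a group of exponent $p^{\min(a,k-a)}$.

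That is not enough to reach order $p^{k-a}$, so I must use instead the non-cyclic part of $Q$ (the prime-to-$p$ part acting trivially on $\mu_{p^k}$). Inflating $\Hom$ classes from the quotient where $\mu_{p^k}$ acts trivially gives homomorphisms of order up to $p^{k-a}$. Concretely: since $\Gal(K(\zeta_{m'})/K(\zeta_{p^k}))$ surjects onto large cyclic $p$-groups as $N$ grows (choose $m'$ divisible by primes $\ell\equiv1\bmod p^{k}$), the restriction part contributes elements of order $p^{k-a}$ that survive. This is the main obstacle: exhibiting an element of order $\geq p^{k-a}$ in the target, and checking it really lies in the image of inflation rather than being killed by the restriction obstruction.
\end{itemize}

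\emph{The source.} The source $H^1(G_{p^in},\mu_{p^i})$ is killed by $p^i$. Hence surjectivity of $\rho_n$ forces $p^i\geq$ the exponent of the target, i.e.\ $p^{k-a}\mid\Lambda$.

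\emph{The case $\zeta_4\notin K$.} Here $\lambda\equiv2\pmod 4$, and the same argument with $k\geq2$ and $a=1$ gives an element of order $2^{k-1}\geq2$. This alone does not yield $4\mid\Lambda$. Instead, assume $\ord_2(\Lambda)\leq1$ and directly exhibit a violation of \eqref{eq:chbassnumber}: $-4=(1+\zeta_4)^4$ is a $4$th power in $K(\zeta_4)$ but not a square in $K$ (since $\zeta_4\notin K$ means $\sqrt{-1}\notin K$, and $-4$ a square would give $\sqrt{-1}\in K$). With $\Lambda$ having $2$-part $2$ and $n=2$, apply the property for $n$ chosen as $2\cdot(\text{odd part of }\Lambda)$ and use $x=-4$ raised to a suitable odd power. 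This contradiction shows $4\mid\Lambda$.
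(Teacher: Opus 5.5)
\textbf{There is a genuine gap in the main part of your proof.}

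Your treatment of $4\mid\Lambda$ when $\zeta_4\notin K$ is fine and matches the paper's argument. Raising $-4$ to an odd power is what makes it a $\Lambda n$th power; you should also treat odd $\Lambda$, e.g.\ with $n=4$.

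The main part fails at the key claim that the target $H^1(G_{p^in},\mu_{p^{i+\ord_p(n)}})$ contains an element of order at least $p^{k-a}$. This is false in general. By Lemma~\ref{lem:exponent} (Sah's lemma applied to a lift of $1+p^a$, which is central since $G_n$ is abelian), every group $H^1(G_n,\mu_{p^t})$ is killed by $p^a$, and even by $2$ when $p=2$ and $\zeta_4\notin K$. So comparing exponents of source and target can never give more than $p^{\min(a,k-a)}\mid\Lambda$. The method therefore breaks down as soon as $\ord_p(f)>2\ord_p(\lambda)$.

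Here is a concrete counterexample to your claim. Take $p$ odd, $m$ a prime with $p^2\mid m-1$, $\gamma\in(\Z/m\Z)^\times$ of order $p^2$, and $K=\Q(\zeta_{p^3m})^{\Delta}$ with $\Delta=\langle(1+p,\gamma)\rangle$. Then $\lambda=2p$ and $f=p^3m$, so the lemma asserts $p^2\mid\Lambda$, yet every one of your target groups is killed by $p$.

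Your proposed repair does not help. Homomorphisms on the subgroup $\Gal(K(\zeta_{m'})/K(\zeta_{p^k}))$ sit in the third term of the inflation--restriction sequence. Nothing makes them lie in the image of restriction from $Q$, and Sah's lemma shows that those of order larger than $p^a$ never do. Also, with $n=fp^N$ the prime-to-$p$ part $m'$ does not grow with $N$.

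The missing idea is that non-surjectivity must be witnessed by a specific class, not by an order count. The paper argues elementwise.
\begin{enumerate}
\item Write $f=p^km$ with $p\nmid m$ and $\Lambda=p^ir$ with $p\nmid r$, and suppose $i<k-a$.
\item By \eqref{eq:cyclotomic_agreement}, $K(\zeta_m)=\Q(\zeta_f)$ contains $\zeta_{p^k}$.
\item Take $n=p^{k-a-i}m$, so that $\Lambda n=p^{k-a}rm$ and $\zeta_{p^k}\in K(\zeta_{\Lambda n})$.
\item Since $rm$ is prime to $p$, $\zeta_{p^k}=\eta^{rm}$ for some $\eta\in\mu_{p^k}$. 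Hence the primitive $p^a$th root of unity $\zeta_{p^k}^{p^{k-a}}=\eta^{\Lambda n}$ is a $\Lambda n$th power in $K(\zeta_{\Lambda n})$.
\item By \eqref{eq:chbassnumber} it lies in $(K^\times)^n$, hence is a $p$th power in $K$ because $p\mid n$.
\item This puts a primitive $p^{a+1}$th root of unity in $K$, contradicting the definition of $\lambda$.
\end{enumerate}
In the language of Lemma~\ref{lem:cohomologicalCB}(i), the Kummer class of $\zeta_{p^a}$ lies in the image of inflation but not in the image of $H^1(K,\mu_{\Lambda})$. Since this class has order dividing $p^a$, no exponent comparison can detect it.
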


Note that $\ord_p(f)\geq \ord_p(\lambda)$, except in the special case when $p=2$ and $f$ is odd, which is covered by the last sentence of the statement.

\begin{proof}
Let $f=p^{\ord_p(f)}m$ with $p\nmid m$. According to \eqref{eq:cyclotomic_agreement} in the proof of Lemma~\ref{lem:2}, we have
$$
\forall l\leq \ord_p(f), \qquad K(\zeta_m) = K(\zeta_{p^lm}).
$$

In particular, $\zeta_{p^{\ord_p(\lambda)}}$ is a $p^{\ord_p(f)-\ord_p(\lambda)}$th power in $K(\zeta_{p^{\ord_p(f)-\ord_p(\lambda)}m})$, hence is also a $p^{\ord_p(f)-\ord_p(\lambda)}m$th power is this field, but is not a $p$th power in $K$. The result follows by definition of the Chevalley-Bass number of $K$.

Finally, if $\zeta_4\notin K$ then $-1$ is not a square in $K$. Hence $-4=(1+\zeta_4)^4$ is a $4$th power in $K(\zeta_4)$ but is not a square in $K$, so the Chevalley-Bass number of $K$ is divisible by $4$.
\end{proof}

%%%%%%%%%%%%%%%%%%%%%%%%%%%%%%%%%%%%%%%%%%%%%

\subsection{Proof of the main results}

We now collect the results of our previous computation.

\begin{lemma}
\label{lem:key}
The Chevalley-Bass number of $K$ divides the integer
$$
\ff:=
\begin{cases}
\prod_{p\mid \lambda} p^{\ord_p(f)} & \text{if $4\mid f$} \\
4\prod_{p\mid \lambda} p^{\ord_p(f)} & \text{if $f$ is odd}.
\end{cases}
$$
\end{lemma}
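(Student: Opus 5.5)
We argue one prime at a time, using Corollary~\ref{cor:prime_by_prime}. It suffices to show that for every prime $p$ the integer $j_p:=\ord_p(\ff)$ satisfies the following: for every $n>0$, the map
$$
H^1(G_{p^{j_p}n},\mu_{p^{j_p}}) \to H^1(G_{p^{j_p}n},\mu_{p^{j_p+\ord_p(n)}})
$$
is surjective. Then $\ord_p(\Lambda)\leq j_p$ for all $p$, hence $\Lambda\mid\ff$. We split according to $p$.

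\emph{Case $p\nmid\lambda$.} Here $j_p=0$ (note that $\lambda$ is even, so $p$ is odd). The target is $H^1(G_n,\mu_{p^{\ord_p(n)}})$, which vanishes by Lemma~\ref{lem:1}, so surjectivity is automatic.

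\emph{Case $p\mid\lambda$ with $p$ odd, or $p=2$ and $\zeta_4\in K$.} Here $j_p=\ord_p(f)\geq\ord_p(\lambda)$. Put $N:=p^{j_p}n$, so that $\ord_p(N)\geq\ord_p(\lambda)$. By Lemma~\ref{lem:2}, the target $H^1(G_N,\mu_{p^{\ord_p(N)}})$ is the inflation of $H^1(G_N/H_p,\mu_{p^k})$ for some $k\leq\ord_p(f)=j_p$. Since $k\leq j_p$, we have $\mu_{p^k}\subseteq\mu_{p^{j_p}}$. Every class in the target is therefore the image of a class with values in $\mu_{p^k}\subseteq\mu_{p^{j_p}}$, namely the inflation from $G_N/H_p$ viewed in $H^1(G_N,\mu_{p^{j_p}})$. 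It remains to check that the diagram commutes: inflation from $G_N/H_p$ with coefficients $\mu_{p^k}$, followed by the inclusion into $\mu_{p^{\ord_p(N)}}$, factors through $H^1(G_N,\mu_{p^{j_p}})$. This is functoriality of $H^1(G_N,-)$ applied to $\mu_{p^k}\subseteq\mu_{p^{j_p}}\subseteq\mu_{p^{\ord_p(N)}}$, so surjectivity follows.

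\emph{Case $p=2$ and $\zeta_4\notin K$.} Here $\ord_2(\lambda)=1$, and $j_2=\ord_2(f)$ if $4\mid f$, while $j_2=2$ if $f$ is odd. In both cases $j_2=\max(2,\ord_2(f))$. This uses the fact that $\ord_2(f)\neq1$ for a conductor, so $j_2\geq2$ whenever $4\mid f$. Now set $N=2^{j_2}n$, so $\ord_2(N)\geq2$. Lemma~\ref{lem:3} identifies the target with the inflation of $H^1(G_N/H_4,\mu_{2^k})$ for some $k\leq\max(2,\ord_2(f))=j_2$. The same functoriality argument as in the previous case then gives surjectivity.

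The main point requiring care is the bookkeeping in the $p=2$ case: confirming that the two defining cases of $\ff$ match $\max(2,\ord_2(f))$. This rests on the standard fact that a conductor is never $\equiv2\pmod 4$. Beyond that, the argument is simply the observation that the image of inflation already lands in the smaller coefficient module.
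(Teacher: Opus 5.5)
Your proof is correct and follows the paper's argument. You reduce prime by prime via Corollary~\ref{cor:prime_by_prime}, dispose of $p\nmid\lambda$ with Lemma~\ref{lem:1}, and for $p\mid\lambda$ factor the inflation isomorphism of Lemma~\ref{lem:2} (resp.\ Lemma~\ref{lem:3}) through $H^1(G_N,\mu_{p^{j_p}})$ using $k\leq j_p$; the only differences are cosmetic: you work with $N=p^{j_p}n$, which matches the hypothesis of Corollary~\ref{cor:prime_by_prime} exactly where the paper writes $G_{f'n}$, and you get the factorization by functoriality in the coefficients rather than via $(\mu_{p^{j_p}})^{H_p}=\mu_{p^k}$.
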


\begin{proof}
According to Corollary~\ref{cor:prime_by_prime} it suffices to prove that, for any prime number $p$ dividing $\ff{}n$, the map
\begin{equation}
\label{eq:foo_map}
H^1(G_{\ff{}n}, \mu_{p^{\ord_p(\ff)}}) \to H^1(G_{\ff{}n}, \mu_{p^{\ord_p(\ff{}n)}})
\end{equation}
is surjective.
According to Lemma~\ref{lem:1}, if $p\nmid \lambda$ then $H^1(G_{\ff{}n},\mu_{p^{\ord_p(\ff{}n)}})=0$, hence in this case the result is trivially true. Let us now assume that $p\mid \lambda$, then $\ord_p(\ff)=\ord_p(f)$ if $p$ is odd, and $\ord_2(\ff)=\max(2,\ord_2(f))$ by definition of $\ff$. When $p$ is odd, or when $p=2$ and $\zeta_4\in K$, it follows from Lemma~\ref{lem:2} that there exists a subgroup $H_p\leq G_{\ff{}n}$ for which the inflation map
$$
\infl:H^1(G_{\ff{}n}/H_p, \mu_{p^k}) \to H^1(G_{\ff{}n}, \mu_{p^{\ord_p(\ff{}n)}})
$$
is an isomorphism, for some integer $k\leq \ord_p(f)=\ord_p(\ff)$. Then $(\mu_{p^{\ord_p(\ff{}n)}})^{H_p} = (\mu_{p^{\ord_p(\ff)}})^{H_p} =\mu_{p^k}$ hence the inflation map above factors as
$$
\begin{tikzcd}
H^1(G_{\ff{}n}/H_p, \mu_{p^k}) \arrow[r, "\infl'"] & H^1(G_{\ff{}n}, \mu_{p^{\ord_p(\ff)}}) \arrow[r] & H^1(G_{\ff{}n}, \mu_{p^{\ord_p(\ff{}n)}}),
\end{tikzcd}
$$
where $\infl'$ in the inflation map for $\mu_{p^{\ord_p(\ff)}}$, and the right-hand side map is \eqref{eq:foo_map}.
The bijectivity of the composition of the two maps implies the surjectivity of the right-hand side map, hence the result.
In the case when $p=2$ and $\zeta_4\notin K$, the proof can be done along the same lines, using Lemma~\ref{lem:3} instead.
\end{proof}

\begin{lemma}
\label{lem:exponent}
For every integer $n$, the group $H^1(G_n, \mu_n)$ is killed by $\lambda$. Moreover, the group
$H^1(G_{\ff{}n},\mu_{\ff{}n})$ has exponent $\lambda$ for infinitely many values of $n$ (with notation of Lemma~\ref{lem:key}), and its order is unbounded when $n$ varies.
\end{lemma}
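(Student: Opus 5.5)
We work one prime at a time: $\mu_n\simeq\prod_p\mu_{p^{\ord_p(n)}}$ gives
$$
H^1(G_n,\mu_n)\simeq\prod_{p\mid n}H^1(G_n,\mu_{p^{\ord_p(n)}}),
$$
so it suffices to show that each factor is killed by $p^{\ord_p(\lambda)}$.

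\emph{The exponent bound.} For $p\nmid\lambda$ the factor vanishes by Lemma~\ref{lem:1}. Now let $p\mid\lambda$.

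First suppose $\ord_p(n)\geq\ord_p(\lambda)$, with $p$ odd or $\zeta_4\in K$. By Lemma~\ref{lem:2} the factor is $H^1(G_n/H_p,\mu_{p^k})$, and the quotient $G_n/H_p\simeq\Gal(K(\zeta_{n'})/K)$ with $n'=n/p^{\ord_p(n)}$ acts on $\mu_{p^k}$ through $\Gal(K(\zeta_{p^k})/K)$. Use the (non-canonical) decomposition of the abelian group $G_n/H_p$ as a direct product of its $p$-part $P$ and its prime-to-$p$ part $Q$. Inflation–restriction then gives $H^1(G_n/H_p,\mu_{p^k})\hookrightarrow H^1(P,\mu_{p^k})^{Q}$, up to a term $H^1(Q,\cdot)=0$.

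Next, $P$ embeds into $\Gal(K(\zeta_{p^k})/K)$, which is cyclic, equal to $\Omega_p^{\ord_p(\lambda),k}$ (up to the prime-to-$p$ part). Hence $P\simeq P_0\times P_1$, where $P_1$ acts trivially and $P_0$ maps isomorphically onto a cyclic group $\Omega_p^{\ell,k}$. The claim is that $H^1$ of a $p$-group acting on $\mu_{p^k}$ through $\Omega_p^{\ell,k}$, where the fixed part is $\mu_{p^{\ord_p(\lambda)}}$ (no larger, since it is exactly $\mu\cap K$), is killed by $p^{\ord_p(\lambda)}$. To see this, use Künneth / inflation–restriction along $P_1$:
\begin{itemize}
\item $H^1(P_0,\mu_{p^k})=0$ by Lemma~\ref{lem:2b};
\item $H^1(P_1,\mu_{p^k})^{P_0}=\Hom(P_1,\mu_{p^k}^{P_0})=\Hom(P_1,\mu_{p^{\ord_p(\lambda)}})$, which is killed by $p^{\ord_p(\lambda)}$.
\end{itemize}

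If instead $\ord_p(n)<\ord_p(\lambda)$, the group $G_n$ acts trivially on $\mu_{p^{\ord_p(n)}}$, so the factor is a $\Hom$ group killed by $p^{\ord_p(n)}$. The case $p=2$ with $\zeta_4\notin K$ is identical using Lemma~\ref{lem:3}, where now $\ord_2(\lambda)=1$: the nontrivial $\Gal(K(\zeta_4)/K)$ acts by $-1$ and contributes a quotient of $H^1(\Z/2,\mu_{2^k}^{\Omega})$, which is killed by $2$.

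\emph{Exponent exactly $\lambda$, unbounded order.} Take $n=\ff\cdot q_1\cdots q_r$ with primes $q_i\equiv 1\pmod{\lambda}$ not dividing $\ff$ (Dirichlet). The fields $K(\zeta_{q_i})$ are linearly disjoint over $K(\zeta_{\ff})$, since ramification at $q_i$ is total in $\Q(\zeta_{q_i})$ and $q_i\nmid f$. Hence $G_{\ff n}$ contains a direct factor $\prod_i(\Z/q_i\Z)^\times$ acting trivially on $\mu_{p^\infty}$. Inflation from the quotient by the complementary factor, followed by restriction, shows that $H^1(G_{\ff n},\mu_{\ff n})$ surjects onto (or contains) $\prod_i\Hom((\Z/q_i\Z)^\times,\mu_\lambda)\simeq(\Z/\lambda\Z)^r$. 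The cleanest way to get this is to note that $\mu_{\ff n}^{\text{that factor}}$ contains $\mu_\lambda$ and then split the product. This has exponent $\lambda$ and order at least $\lambda^r$, unbounded in $r$.

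The main obstacle is making the splitting $P\simeq P_0\times P_1$ rigorous: the $p$-part of $G_n/H_p$ need not split compatibly with its image in the cyclic group $\Omega_p^{\ord_p(\lambda),k}$. The first thing to try is to use that a surjection onto a cyclic $p$-group from a finite abelian $p$-group always splits; choosing any preimage of a generator then gives the complement.
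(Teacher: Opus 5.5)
You have a genuine gap in the exponent bound for $p=2$ with $\zeta_4\notin K$. The second statement is fine.

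\textbf{The splitting is false.} The step that fails is $P\simeq P_0\times P_1$, and your proposed justification is false: a surjection from a finite abelian $p$-group onto a cyclic $p$-group need not split ($\Z/p^2\Z\to\Z/p\Z$). It genuinely fails here. In the example of \S\ref{sub:examples} with $\gamma$ of order $p^3$ and $n=p^4m$, one has $H_p=1$, and $P=\Delta\simeq\Z/p^3\Z$ acts on $\mu_{p^4}$ through $\Omega_p^{2,4}\simeq\Z/p^2\Z$. So $P$ surjects onto, rather than embeds into, $\Gal(K(\zeta_{p^k})/K)$.

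\textbf{Harmless for $p$ odd or $\zeta_4\in K$.} Inflation--restriction along the kernel $P_1$ needs no complement:
\[
0\to H^1(P/P_1,\mu_{p^k})\to H^1(P,\mu_{p^k})\to \Hom(P_1,\mu_{p^k})^{P}=\Hom(P_1,\mu_{p^{\ord_p(\lambda)}}).
\]
Here $P/P_1\simeq\Omega_p^{\ord_p(\lambda),k}$ with its natural action, so Lemma~\ref{lem:2b} kills the left term. These are your two bullets with $P_0$ replaced by $P/P_1$.

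\textbf{Load-bearing for $p=2$, $\zeta_4\notin K$.} Without the splitting, let $A$ be the image of $P$ in $(\Z/2^k\Z)^\times$. The left term $H^1(A,\mu_{2^k})$ no longer vanishes in general and is only known to be killed by $2$. The right term $\Hom(P_1,\mu_2)$ is killed by $2$. An extension of two groups killed by $2$ is only killed by $4$, which is not enough since $\ord_2(\lambda)=1$. Non-split configurations do occur. For $K=\Q(\sqrt{-5})$ and $n=20$, $G_{20}=\{1,3,7,9\}\simeq\Z/4\Z$ acts on $\mu_4$ through $\{\pm1\}$ with kernel $\{1,9\}$. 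Your sequence then reads $0\to\Z/2\Z\to H^1(G_{20},\mu_4)\to\Z/2\Z$, which does not rule out $\Z/4\Z$.

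\textbf{The missing idea} is the one the paper uses, Sah's lemma. Since $G_n$ is abelian, expanding $\phi(\sigma g)=\phi(g\sigma)$ for a cocycle $\phi$ gives $(\sigma-1)\phi(g)=(g-1)\phi(\sigma)$. Hence the coefficient endomorphism $\sigma-1$ kills $H^1(G_n,\cdot)$. Take $\sigma\in G_n$ acting on $\mu_{2^{\ord_2(n)}}$ by $3+4\beta$. Then $\sigma-1=2(1+2\beta)$ with $1+2\beta$ a unit, which gives exponent $2$. The paper applies the same trick with a lift of $1+p^{\ord_p(\lambda)}$ for the other primes. This bypasses Lemmas~\ref{lem:2}, \ref{lem:3} and all Sylow or splitting considerations.

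\textbf{Your odd-$p$ route} is a valid alternative and even sharper. Since Lemma~\ref{lem:2b} also kills $H^2$, the five-term sequence gives $H^1(P,\mu_{p^k})\simeq\Hom(P_1,\mu_{p^{\ord_p(\lambda)}})$.

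\textbf{Your second part} is correct and differs from the paper's. You inflate $\Hom(\prod_i(\Z/q_i\Z)^\times,\mu_\lambda)$ along the projection onto the direct factor $\prod_i(\Z/q_i\Z)^\times$, which gives a genuine copy of $(\Z/\lambda\Z)^r$. The paper's long exact sequence instead loses a cyclic kernel $\mu_{\lambda'}$. You should choose $q_i\nmid f$, not merely $q_i\nmid\ff$, since your disjointness argument uses it.
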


\begin{proof}
Recall that we have an isomorphism
$$
H^1(G_n, \mu_n) \simeq \prod_{\text{primes}~p\mid n} H^1(G_n, \mu_{p^{\ord_p(n)}}).
$$
Therefore, by Lemma~\ref{lem:1}, in order to prove the first statement it suffices to prove that, for all primes $p$ dividing $\lambda$, the group $H^1(G_n,\mu_{p^{\ord_p(n)}})$ is killed by $p^{\ord_p(\lambda)}$. This trivially holds when $\ord_p(n)<\ord_p(\lambda)$, so we may assume that $\ord_p(n)\geq \ord_p(\lambda)$.

Assume first that $p$ is odd, or that $p=2$ and $\zeta_4\in K$. We claim that
\begin{equation}
\label{eq:omegaquotient}
\begin{split}
G_n\twoheadrightarrow \Gal(K(\zeta_{p^{\ord_p(n)}})/K)  &\simeq \Gal\left(\Q(\zeta_{p^{\ord_p(n)}})/\Q(\zeta_{p^{\ord_p(n)}})\cap K\right) \\
    &= \Gal(\Q(\zeta_{p^{\ord_p(n)}})/\Q(\zeta_{p^{\ord_p(\lambda)}})) = \Omega_p^{\ord_p(\lambda),\ord_p(n)}
\end{split}
\end{equation}

The first equality requires a careful check: since $\zeta_{p^{\ord_p(\lambda)}}$ belongs to $K$, the second Galois group is a subgroup of the last. We may apply the same argument in the proof of Lemma~\ref{lem:2}: since $\Gal(\Q(\zeta_{p^{\ord_p(n)}})/\Q(\zeta_{p^{\ord_p(\lambda)}}))=\Omega_p^{\ord_p(\lambda),\ord_p(n)}$
is cyclic of order $p^{\ord_p(n)-\ord_p(\lambda)}$, its subgroups are of the form $\Gal(\Q(\zeta_{p^{\ord_p(n)}})/\Q(\zeta_{p^\ell}))$, hence $\Q(\zeta_{p^{\ord_p(n)}})\cap K=\Q(\zeta_{p^\ell})$ for some $\ell\geq \ord_p(\lambda)$, and we conclude that $\ell=\ord_p(\lambda)$ by definition of $\lambda$, hence the claim.

It follows from \eqref{eq:omegaquotient} that $\Omega_p^{\ord_p(\lambda),\ord_p(n)}$ is a quotient of $G_n$. Let $\tau=1+p^{\ord_p(\lambda)}$, which is a generator of $\Omega_p^{\ord_p(\lambda),\ord_p(n)}$, and let $\tilde{\tau} \in G_n$ be a lift of $\tau$. 
Then by construction $\tilde{\tau}$ also acts on by multiplication by $1+p^{\ord_p(\lambda)}$ on $\mu_{p^{\ord_p(n)}}$. But Sah's Lemma \cite[Lemma~A.2]{BakerRibet2003} implies that the group $H^1(G_n,\mu_{p^{\ord_p(n)}})$ is killed by $\tilde{\tau}-1$, hence the result.

It remains to treat the case when $p=2$ and $\zeta_4\notin K$, in which case the map
$$
G_n \to (\Z/4\Z)^\times
$$
is surjective, since the action of $G_n$ on the $4$th roots of unity is not trivial. In particular, there exists an element $\sigma\in G_n$ such that $\sigma\equiv 3 \pmod{4}$. Let $\sigma=3+4\beta$ for some $\beta$, then $\sigma-1$ kills the group $H^1(G_n,\mu_{2^{\ord_2(n)}})$. But $\sigma-1 = 2\cdot (1+2\beta)$ and since $(1+2\beta)$ is odd it is an automorphism of this group. Therefore, the group is killed by $2$, hence the result.

Let us prove the second statement. Consider the short exact sequence of $G_{\ff{}n}$-modules
$$
1\to \mu_{\lambda} \to \mu_{\ff{}n}\to \mu_{\ff{}n/\lambda}\to 1
$$

Since $(\mu_{\ff{}n/\lambda})^{G_{\ff{}n}}$ is a subgroup of $\mu_{\lambda}$, it is cyclic of order $\lambda'\mid \lambda$. Therefore, we have an exact sequence
$$
1\to \mu_{\lambda'} \to H^1(G_{\ff{}n},\mu_{\lambda}) \to H^1(G_{\ff{}n},\mu_{\ff{}n}).
$$

In view of this exact sequence it is enough to show that, for suitable values of $n$, $H^1(G_{\ff{}n},\mu_{\lambda})$ contains a subgroup isomorphic to $(\Z/\lambda\Z)^r$ where $r$ can be made arbitrarily large.

Since $G_{\ff{}n}$ acts trivially on $\mu_{\lambda}$, we have
\begin{equation}
\label{eq:Hom}
H^1(G_{\ff{}n},\mu_{\lambda}) = \Hom(G_{\ff{}n},\mu_{\lambda}).
\end{equation}

On the other hand, if $n$ is coprime to $2f$ we have
$$
(\Z/n\Z)^\times \simeq \Gal\left(K(\zeta_{\ff{}n})/K(\zeta_{\ff})\right) \subseteq G_{\ff{}n}.
$$

Let us choose, for each prime $p$ dividing $\lambda$, a prime $q$ such that $p^{\ord_p(\lambda)}$ divides $q-1$ (for which there exist infinitely many choices), and let $n$ be the product of all these $q$. Then $(\Z/n\Z)^\times$ has a cyclic subgroup of order $p^{\ord_p(\lambda)}$ for each such $p$, hence by the Chinese remainder Theorem it has a cyclic subgroup of order $\lambda$. Since all the groups involved are abelian, it follows that $G_{\ff{}n}$ has a cyclic quotient of order $\lambda$. So there exists a surjective map $G_{\ff{}n}\to \mu_{\lambda}$ which corresponds by \eqref{eq:Hom} to an element of order exactly $\lambda$ in the group $H^1(G_{\ff{}n},\mu_{\lambda})$. In order to construct $r$ independent elements of order $\lambda$ in this group, it suffices to pick, for each prime $p\mid\lambda$, $r$ values of $q$ such that $p^{\ord_p(\lambda)}$ divides $q-1$.
\end{proof}

\begin{proof}[Proof of Theorem~\ref{thm:main}]
According to Lemma~\ref{lem:key}, the Chevalley-Bass number of $K$ divides $\ff$. It follows from Lemma~\ref{lem:exponent} that it is divisible by $\lambda$. It follows from Lemma~\ref{lem:borne_inf} that is it divisible by $\ff/\lambda$ when $4\mid f$, and by the odd number $\ff/2\lambda$ when $f$ is odd. Finally, it is always divisible by $4$: when $\zeta_4\in K$ this holds because $4\mid\lambda\mid \Lambda$, and when $\zeta_4\notin K$ this is the last sentence of Lemma~\ref{lem:borne_inf}. The result follows.
\end{proof}

\begin{proof}[Proof of Corollary~\ref{cor:intro}]
Let $x\in K^\times$ which is a $\lambda{}n$th power in $K(\zeta_{\lambda{}n})^\times$. Then, by exactness of the inflation-restriction sequence
\[
\begin{tikzcd}
0 
  \arrow[r] 
& H^1(G_{\lambda{}n},\mu_{\lambda{}n})
  \arrow[r] 
& K^\times/(K^\times)^{\lambda{}n}
  \arrow[r] 
& K(\zeta_{\lambda{}n})^\times/(K(\zeta_{\lambda{}n})^\times)^{\lambda{}n}
\end{tikzcd}
\]
the class of $x$ in $K^\times/(K^\times)^{\lambda{}n}$ belongs to the image of the inflation map, which is killed by $\lambda$ according to Lemma~\ref{lem:exponent}. In particular, the class of $x^\lambda$ is trivial in $K^\times/(K^\times)^{\lambda{}n}$. This means that there exists $y\in K^\times$ such that $x^\lambda = y^{\lambda{}n}$, and it follows immediately that $x=\varepsilon y^n$ for some $\lambda$th root of unity $\varepsilon\in K$.
\end{proof}

\begin{proof}[Proof of Corollary~\ref{cor:LaurentBilu}]
Pick a positive integer $n$ such that $\alpha_1^n,\dots,\alpha_s^n$ belong to $K$. The first step in the proof of Proposition~4.4 of \cite{BLNOW2025} proves that the quantities $\alpha_1,\dots,\alpha_s$ belong to $K(\zeta_n)$. Then $\alpha_1^{\lambda{}n},\dots,\alpha_s^{\lambda{}n}$ are elements of $K$ which are $\lambda{}n$th powers in $K(\zeta_n)\subset K(\zeta_{\lambda{}n})$, hence by Corollary~\ref{cor:intro} there exist $y_i \in K$ and $\lambda$th roots of unity $\varepsilon_i$ such that 
$$
\alpha_i^{\lambda{}n} = \varepsilon_i y_i^n \qquad \text{for $i=1,\dots,s$.}
$$
The result follows.
\end{proof}

Let us add a few comments on Remark~\ref{remark:forYuri}: one can replace $\Lambda$ by $\lambda$ in Proposition~4.5 of \cite{BLNOW2025} since this statement is merely a consequence of Proposition~4.4 of \emph{loc. cit.}

A last comment about the improvement of Proposition~4.7 of \cite{BLNOW2025}: let $K$ be the splitting field of a polynomial of degree $m\geq 2$ with coefficients in $\Q$. Then as recalled in the proof of Proposition~6.5 of \cite{bilu2023} we have the estimate $[K_{\ab}:\Q]\leq 3^{m/3}$. The number of roots of unity contained in $K$ is the largest integer $\lambda$ such that $\Q(\zeta_\lambda)\subseteq K_{\ab}$. In particular, it satisfies $\varphi(\lambda)\leq [K_{\ab}:\Q]\leq 3^{m/3}$. Since $\varphi(\lambda)^2\geq \lambda/2$ holds for all $\lambda$, it follows that $\lambda/2\leq 3^{2m/3} \leq \exp(m)$, and the result follows from the improved version of Proposition~4.5.

%%%%%%%%%%%%%%%%%%%%%%%%%%%%%%%%%%%%%%%%%%%%%

\subsection{Computation of the Chevalley-Bass number}
\label{sub:algorithm}

In this section, we give an algorithm which allows to compute the Chevalley-Bass number of a field $K$. It relies on the following Lemma, which allows to reduce to a finite number of steps in order to compute the Chevalley-Bass number $\Lambda$ of $K$.

\begin{lemma}
\label{lem:CB_algorithm}
Let $p\mid \lambda$. Assume that $p$ is an odd prime, or that $p=2$ and $\zeta_4\in K$.
Then $\ord_p(\Lambda)$ in the smallest integer
$$
j\geq \max\{\ord_p(\lambda),\ord_p(f)-\ord_p(\lambda)\}
$$
such that, for all $n$ of the form
$$
n=p^t\prod_{\substack{\text{primes}~q\mid f \\ q\equiv 1 \bmod p }} q, \quad \text{with } j < t < \ord_p\exp(G_f)+\ord_p(\lambda)
$$
the map $H^1(G_n,\mu_{p^j}) \to H^1(G_n,\mu_{p^t})$ is surjective.

When $p=2$ and $4\mid f$ and $\zeta_4\notin K$, the same holds with $j\geq \max\{2,\ord_2(f)-1\}$, the condition on $t$ being replaced by  $j \leq t\leq \ord_2\exp(G_f)+2$.
\end{lemma}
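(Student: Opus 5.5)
The plan is to start from Corollary~\ref{cor:prime_by_prime}: $\ord_p(\Lambda)$ is the least $j$ such that, for every $n>0$, the map $H^1(G_{p^jn},\mu_{p^j})\to H^1(G_{p^jn},\mu_{p^{j+\ord_p(n)}})$ is surjective. By Theorem~\ref{thm:main} (more precisely Lemmas~\ref{lem:exponent} and \ref{lem:borne_inf}), this least $j$ automatically satisfies $j\geq \max\{\ord_p(\lambda),\ord_p(f)-\ord_p(\lambda)\}$. So it suffices to show the following: for $j$ in this range, surjectivity for all $n$ is equivalent to surjectivity for the finitely many $n$ listed in the statement. One direction is trivial. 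For the other, I will reduce an arbitrary $N$, with $t:=\ord_p(N)>j$, to a listed $n$. I may rename $N$ so that the modulus is $N$ itself with $\ord_p(N)=t$. The case $t\le j$ is trivial, since the map is then the identity.

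\textbf{Step 1: strip the prime-to-$p$ part.} Write $N=p^tN'$. Use the inflation--restriction argument from the proof of Corollary~\ref{cor:prime_by_prime}, i.e.\ the snake lemma applied to a subgroup $T$ acting trivially on $\mu_{p^t}$. Here $T=\Gal(K(\zeta_N)/K(\zeta_{p^t n_0}))$, where $n_0=\prod q$ over primes $q\mid f$ with $q\equiv 1\bmod p$.

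Surjectivity for $N$ should be equivalent to surjectivity for $p^tn_0$, in both directions. The restriction terms $\Hom(T,\mu_{p^j})^{G}$ and $\Hom(T,\mu_{p^t})^{G}$ must be compared. Since $T$ is abelian and $G$ acts on $\mu$ through $\Omega_p$ nontrivially, the $G$-invariant homomorphisms land in $\mu_{p^{\ord_p(\lambda)}}\subseteq\mu_{p^j}$, so the two groups agree. This gives the injectivity needed for the snake lemma, and hence an isomorphism of cokernels.

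The primes $q\nmid f$ contribute nothing beyond what $T$ absorbs, because $K(\zeta_{q^a})$ is then linearly disjoint and unramified at $p$. For primes $q\mid f$ with $q\not\equiv 1\bmod p$, and for higher powers of $q\mid f$, I expect $\Gal(K(\zeta_{q^a\cdots})/\cdot)$ to have order prime to $p$ or to be covered by the same argument. This lets one drop them and keep exactly $n_0$, which is the only place where the non-$p$ part of $f$ interacts with $\mu_{p^\infty}$.

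\textbf{Step 2: bound $t$.} For $t$ large, Lemma~\ref{lem:2} identifies $H^1(G_n,\mu_{p^t})$ with $H^1(G_n/H_p,\mu_{p^k})$, where $k=\ord_p(f)$ once $t\ge\ord_p(f)$. The quotient $G_n/H_p\simeq\Gal(K(\zeta_{n_0})/K)$ is then independent of $t$. I would show that the map $\mu_{p^j}\to\mu_{p^t}$ on cohomology becomes stable once $p^{t-\ord_p(\lambda)}$ exceeds the $p$-part of the exponent of $G_f$. At that point the relevant quotient $\Omega_p^{\ord_p(\lambda),t}$ splits off compatibly, and the cokernel for $t$ equals the cokernel for $t-1$, giving the bound $t<\ord_p\exp(G_f)+\ord_p(\lambda)$.

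I expect this stabilisation to be the main obstacle. It requires a precise description of $G_n$ as an extension of $\Gal(K(\zeta_{n_0})/K)$ by $\Omega_p$, and control of where the splitting occurs in terms of $\exp(G_f)$.

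For $p=2$ with $\zeta_4\notin K$, the same scheme applies using Lemma~\ref{lem:3} and $H_4$ in place of Lemma~\ref{lem:2} and $H_p$, with $2$ playing the role of $\ord_p(\lambda)$. This accounts for the modified bounds stated for that case.
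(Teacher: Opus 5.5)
\textbf{Step~1 has a gap.} In the diagram from the proof of Corollary~\ref{cor:prime_by_prime}, take $G=G_N$ and $T=\Gal(K(\zeta_N)/K(\zeta_{p^tn_0}))$. Equality of the two groups $\Hom(T,\mu_{p^{\ord_p(\lambda)}})$ only makes $\img(\rest_1)\to\img(\rest_2)$ injective. The snake lemma then gives $0\to\coker(\rho)\to\coker(\theta)\to\img(\rest_2)/\img(\rest_1)\to 0$. So you only get ``surjective for $N$ $\Rightarrow$ surjective for $p^tn_0$'', which is the trivial direction. The direction you need would require the two images of restriction to coincide, i.e.\ the kernels of the transgressions to $H^2(G_{p^tn_0},\mu_{p^j})$ and $H^2(G_{p^tn_0},\mu_{p^t})$. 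Nothing you say gives this.

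Your argument never uses \emph{which} primes are discarded, and it cannot be right as stated. Apply it to $T=\Gal(K(\zeta_{p^4m})/K(\zeta_{p^4}))$ in the intermediate example of \S\ref{sub:examples}, with $j=2$. It would show that surjectivity for $n=p^4$ implies surjectivity for $n=p^4m$. The first holds, since $G_{p^4}=\Omega_p^{2,4}$; the second fails there.

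The paper instead uses two structural facts that you list only as expectations, and they are not ``covered by the same argument.''
\begin{enumerate}
\item If $N=n'N_2$ with $\gcd(N_2,f)=1$, then $\Q(\zeta_N)\cap K=\Q(\zeta_{n'})\cap K$. Hence $G_N=G_{n'}\times(\Z/N_2\Z)^\times$ with the second factor acting trivially. Inflation--restriction then becomes a short exact sequence whose left term $H^1((\Z/N_2\Z)^\times,\mu_{p^{\ord_p(\lambda)}})$ is the same for $\mu_{p^j}$ and $\mu_{p^t}$. This does give an isomorphism of cokernels.
\item The kernel of $G_{n'}\to G_{n''}$ has order prime to $p$. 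Dropping higher powers of primes dividing $f$ and the primes $q\not\equiv 1\bmod p$ only quotients by such a subgroup. Its $H^1$ with $p$-primary coefficients vanishes, so inflation is an isomorphism.
\end{enumerate}
If $p^tn_0\nmid N$, your $T$ is not even defined. There the injection you do have is the right tool, after passing to $\lcm(N,p^tn_0)$.

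\textbf{Step~2 is a plan rather than a proof.} Going through Lemma~\ref{lem:2} cannot suffice. $H_p$ has invariants $\mu_{p^{\ord_p(f)}}$, so it only yields surjectivity from $\mu_{p^{\ord_p(f)}}$ (Lemma~\ref{lem:key}), not from $\mu_{p^j}$ with $j<\ord_p(f)$. The missing idea is your ``splitting'' made explicit via a larger cyclic subgroup.
\begin{enumerate}
\item Take $n=p^tm'$ with $m'\mid n_0$ and $t\geq\ord_p\exp(G_f)+\ord_p(\lambda)$; this forces $t\geq\ord_p(f)$. Since $\Q(\zeta_n)\cap K=\Q(\zeta_{p^{\ord_p(f)}m'})\cap K$, $G_n$ is the full preimage of $G_{p^{\ord_p(f)}m'}$ under reduction.
\item $G_{p^{\ord_p(f)}m'}$ surjects onto the $p$-group $\Omega_p^{\ord_p(\lambda),\ord_p(f)}$. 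Taking $p$-parts gives $\gamma\in(\Z/m'\Z)^\times$ of $p$-power order with $(1+p^{\ord_p(\lambda)},\gamma)\in G_{p^{\ord_p(f)}m'}$.
\item The order of $\gamma$ divides $p^{\ord_p\exp(G_f)}$, hence divides $p^{t-\ord_p(\lambda)}$. So the subgroup $\tilde{\Delta}\leq G_n$ generated by this couple modulo $p^tm'$ projects isomorphically onto $\Omega_p^{\ord_p(\lambda),t}$, compatibly with the action on $\mu_{p^t}$.
\item Lemma~\ref{lem:2b} gives $H^1(\tilde{\Delta},\mu_{p^t})=0$, and $(\mu_{p^t})^{\tilde{\Delta}}=\mu_{p^{\ord_p(\lambda)}}$. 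So $H^1(G_n,\mu_{p^t})$ is inflated from $H^1(G_n/\tilde{\Delta},\mu_{p^{\ord_p(\lambda)}})$. The map is therefore surjective already from $\mu_{p^{\ord_p(\lambda)}}$, hence from $\mu_{p^j}$.
\end{enumerate}
This is exactly where $\exp(G_f)$ enters. The conclusion is that the cokernel vanishes for all such $t$, not a comparison of the cokernels at $t$ and $t-1$.
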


The only case which is not covered above is when $p=2$ and $f$ is odd, in which case it follows from Theorem~\ref{thm:main} that $\ord_2(\Lambda)=2$.

In the statement above, $\exp(G_f)$ denotes the exponent $G_f=\Gal(\Q(\zeta_f)/K)$; its $p$-adic valuation satisfies $\ord_p\exp(G_f)=\ord_p\exp(G_f(p))$ where $G_f(p)$ denotes the $p$-primary subgroup of $G_f$.

Before we start the proof, let us recall an elementary consequence of Galois theory: given two integers $n$ and $n'$ with $n'\mid n$, we have a commutative diagram with exact lines and vertical inclusions
\begin{equation}
\label{eq:GntoGn'}
\begin{tikzcd}
1 \arrow[r] & \mathrm{Gal}(K(\zeta_n)/K(\zeta_{n'})) \arrow[r] \arrow[d, hook]
            & G_n \arrow[r] \arrow[d, hook]
            & G_{n'} \arrow[r] \arrow[d, hook]
            & 1 \\
1 \arrow[r] & \ker(\mathrm{red}^\times) \arrow[r]
            & (\Z/n\Z)^\times \arrow[r, "\mathrm{red}^\times"]
            & (\Z/n'\Z)^\times \arrow[r]
            & 1
\end{tikzcd}
\end{equation}
If in addition $\Q(\zeta_n)\cap K=\Q(\zeta_{n'})\cap K$, then $G_n$ is the inverse image of $G_{n'}$ by the reduction map $\red^\times$, in other terms the vertical inclusion on the left is an equality.

\begin{proof}
By Corollary~\ref{cor:prime_by_prime}, we know that  $\ord_p(\Lambda)$ in the smallest integer $j$ such that, for all $n$ such that $\ord_p(n)\geq j$, the map
$$
H^1(G_n,\mu_{p^j}) \to H^1(G_n,\mu_{p^{\ord_p(n)}})
$$
is surjective. Starting from any integer $n$, we shall show that, in order to check the surjectivity of this map, one may replace $n$ by some integer described in the Lemma. For simplicity, we assume that $p$ is an odd prime, or that $p=2$ and $\zeta_4\in K$ (the last case can be treated similarly). For ease of notation, we let $t:=\ord_p(n)$ throughout the proof. Since our map is trivially bijective when $t=j$, we assume now that $t>j$.
\begin{itemize}
\item One can replace $n$ by an integer whose prime factors all divide $f$. Indeed, write $n=n'N$ with $N$ coprime to $f$, then by the Chinese remainder theorem we have
$$
G_n = G_{n'} \times (\Z/N\Z)^{\times}
$$
where $(\Z/N\Z)^{\times}$ acts trivially on $\mu_{p^{\ord_p(n)}}$. Then the inflation-restriction exact sequence for $G_{n'} \times (\Z/N\Z)^{\times}$ degenerates \cite{Jannsen1990}, hence we have a commutative diagram with exact lines
$$
\begin{tikzcd}
1 \arrow[r] & H^1((\Z/N\Z)^{\times}, (\mu_{p^j})^{G_{n'}}) \arrow[r] \arrow[d]
            & H^1(G_n, \mu_{p^j}) \arrow[r] \arrow[d]
            & H^1(G_{n'}, \mu_{p^j}) \arrow[r] \arrow[d]
            & 1 \\
1 \arrow[r] & H^1((\Z/N\Z)^{\times}, (\mu_{p^t})^{G_{n'}}) \arrow[r]
            & H^1(G_n, \mu_{p^t}) \arrow[r]
            & H^1(G_{n'}, \mu_{p^t}) \arrow[r]
            & 1
\end{tikzcd}
$$
Since $t>j\geq \ord_p(\lambda)$ by assumption, we have that $(\mu_{p^j})^{G_{n'}}=(\mu_{p^j})^{G_{n'}}=\mu_{p^{\ord_p(\lambda)}}$, hence the vertical map on the left is an equality. It follows from the snake lemma that the vertical map in the middle is surjective if and only if the vertical map on the right is. This concludes the first reduction step.

\item One can replace $n'$ (whose prime factors divide $f$) by the integer
$$
n'':=p^t\prod_{\substack{\text{primes}~q\mid n' \\ q\equiv 1 \bmod p}} q \qquad (\text{where } t=\ord_p(n')=\ord_p(n)).
$$

Firstly, observe that the kernel of the reduction (modulo $n''$) map
$$
\red^\times : (\Z/n'\Z)^\times \to (\Z/p^t\Z)^\times \times \prod_{\substack{\text{primes}~q\mid n' \\ q\equiv 1 \bmod p}} (\Z/q\Z)^\times
$$
is a product of groups of the form $\Omega_q^{1,r}$ (when $q\equiv 1 \bmod p$) and of the form $(\Z/q^r\Z)^\times$ otherwise, hence is coprime to $p$. Then by \eqref{eq:GntoGn'} we have an exact sequence $1\to H \to G_{n'} \to G_{n''} \to 1$ for some group $H$ whose order is coprime to $p$, and which acts trivially on $\mu_{p^t}$. Then the inflation-restriction exact sequence yields an isomorphism
$$
\infl: H^1(G_{n''},\mu_{p^t}) \simeq H^1(G_{n'},\mu_{p^t})
$$
and similarly for $\mu_{p^j}$. Thus, the surjectivity of $H^1(G_{n'},\mu_{p^j}) \to H^1(G_{n'},\mu_{p^t})$ is equivalent to the surjectivity of $H^1(G_{n''},\mu_{p^j}) \to H^1(G_{n''},\mu_{p^t})$.

\item One can assume that $n''$ satisfies $t < \ord_p\exp(G_f)+\ord_p(\lambda)$. More precisely, we claim that for $t\geq \ord_p\exp(G_f)+\ord_p(\lambda)$ the map
$$
H^1(G_{n''},\mu_{p^{\ord_p(\lambda)}}) \to H^1(G_{n''},\mu_{p^t})
$$
is surjective, hence the surjectivity condition for $\mu_{p^j}$ is also satisfied.

Firstly, we gather some facts about the structure of the group $G_f$. Let us write $f=p^{\ord_p(f)}m$ with $p\nmid m$. Because $G_f$ stabilizes $\mu_{\lambda}$ and no larger subgroup, we have that
\begin{equation}
\label{eq:Gf_subgroup}
G_f\leq \Omega_p^{\ord_p(\lambda),\ord_p(f)} \times (\Z/m\Z)^\times
\end{equation}
and the projection map on the first factor is surjective
\begin{equation}
\label{eq:Gf_firstfactor}
\begin{tikzcd} 
G_f  \arrow[r, twoheadrightarrow] & \Omega_p^{\ord_p(\lambda),\ord_p(f)}.
\end{tikzcd}
\end{equation}
Since $\Omega_p^{\ord_p(\lambda),\ord_p(f)}$ is a cyclic group of order $p^{\ord_p(f)-\ord_p(\lambda)}$, it follows from our assumption that $t\geq  \ord_p(f)$. One can write $n''=p^tm'$ where $m'\mid m$ (more precisely, $m'$ is a product of (distinct) primes $q\mid m$ which satisfy $q\equiv 1 \bmod p$). Then $\gcd(n'',f)=p^{\ord_p(f)}m'$.

Let us first observe that $\Q(\zeta_{p^tm'})\cap K=\Q(\zeta_{p^{\ord_p(f)}m'})\cap K$: indeed, $K\subset \Q(f)$ and $\Q(\zeta_{p^tm'})\cap \Q(\zeta_f)=\Q(\zeta_{\gcd(n'',f)})$ hence the first field is contained in the second one, and the reverse inclusion is trivial. Therefore, it follows from 
\eqref{eq:GntoGn'} that $G_{p^tm'}$ is the inverse image of $G_{p^{\ord_p(f)}m'}$ by the reduction map
\begin{equation}
\label{eq:reduction_map}
(\Z/p^t\Z)^\times \times (\Z/m'\Z)^\times  \to (\Z/p^{\ord_p(f)}\Z)^\times \times (\Z/m'\Z)^\times.
\end{equation}

Since $p^{\ord_p(f)}m'$ divides $f$, we have a surjective map $G_f\to G_{p^{\ord_p(f)}m'}$, i.e. $G_{p^{\ord_p(f)}m'}$ is the reduction modulo $p^{\ord_p(f)}m'$ of the group $G_f$. Since the projection map \eqref{eq:Gf_firstfactor} is surjective, the same holds for the projection map $G_{p^{\ord_p(f)}m'}\to \Omega_p^{\ord_p(\lambda),\ord_p(f)}$ through which \eqref{eq:Gf_firstfactor} factors. It follows that the group $G_{p^{\ord_p(f)}m'}$ satisfies the  assumptions \eqref{eq:Gf_subgroup} and \eqref{eq:Gf_firstfactor}, the integer $m$ being replaced by $m'$. Moreover, the existence of a surjection $G_f\to G_{p^{\ord_p(f)}m'}$ implies that $\ord_p\exp(G_f) \geq \ord_p\exp(G_{p^{\ord_p(f)}m'})$. Therefore, if one replaces $m$ by $m'$ and $G_f$ by $G_{p^{\ord_p(f)}m}$, the condition $t\geq \ord_p\exp(G_f)+\ord_p(\lambda)$ is preserved.

So, without loss of generality, we may (and do) assume that $m=m'$.

Since $\Omega_p^{\ord_p(\lambda),\ord_p(f)}$ is cyclic, generated by the class of $1+p^{\ord_p(\lambda)}$, and since \eqref{eq:Gf_firstfactor} is surjective, one can pick an element $\gamma\in (\Z/m\Z)^\times$, whose order is a power of $p$, and such that $(1+p^{\ord_p(\lambda)},\gamma)$ belongs to $G_f$. Let $\Delta\leq G_f$ be the cyclic subgroup generated by this couple $(1+p^{\ord_p(\lambda)},\gamma)$.
Then $\tilde{\Delta}$, the inverse image of $\Delta$ by the reduction map \eqref{eq:reduction_map}, is the cyclic subgroup generated by the same couple of integers, where $1+p^{\ord_p(\lambda)}$ is now seen as an integer modulo $p^t$. The order of $1+p^{\ord_p(\lambda)}$ is increased in this operation, more precisely it goes from $p^{\ord_p(f)-\ord_p(\lambda)}$ to $p^{t-\ord_p(\lambda)}$. But we have
$$
t-\ord_p(\lambda) \geq \ord_p\exp(G_f) \geq \ord_p(\text{order of }\gamma),
$$
hence the order of $\gamma$, which is a power of $p$, divides the order of $(1+p^{\ord_p(\lambda)} \bmod p^t)$. It follows that the projection on the first coordinate yields an isomorphism $\tilde{\Delta}\simeq\Omega_p^{\ord_p(\lambda),t}$, and, since $\gamma$ acts trivially on $\mu_{p^t}$, the action of $\tilde{\Delta}$ on $\mu_{p^t}$ can be identified with the natural action of $\Omega_p^{\ord_p(\lambda),t}$ under this isomorphism.
Since $G_{p^tm}$ is the inverse image of $G_f$ by \eqref{eq:reduction_map}, we have $\tilde{\Delta}\leq G_{p^tm}$ with $(\mu_{p^t})^{\tilde{\Delta}}=\mu_{p^{\ord_p(\lambda)}}$ and $H^1(\tilde{\Delta},\mu_{p^t})=H^1(\Omega_p^{\ord_p(\lambda),t},\mu_{p^t})=0$, so by inflation-restriction we have
$$
H^1(G_{p^tm}/\tilde{\Delta},\mu_{p^{\ord_p(\lambda)}})\simeq H^1(G_{p^tm},\mu_{p^t})
$$
hence the result, by the same argument as in the proof of Lemma~\ref{lem:key}.
\end{itemize}
\end{proof}

\subsubsection*{An algorithm to compute the Chevalley-Bass number}

If $K$ is a number field, there exists an algorithm which computes $K_{\ab}$: one first computes the Galois closure $\tilde{K}$ of $K/\Q$ and its Galois group, then $K_{\ab}$ is the field fixed by the group generated by $\Gal(\tilde{K}/K)$ and by the derived subgroup of $\Gal(\tilde{K}/\Q)$.

Assume that $K_{\ab}/\Q$ is known. According to Corollary~\ref{cor:CB_is_ab}, the fields $K$ and $K_{\ab}$ have the same Chevalley-Bass number, so we may assume that $K=K_{\ab}$.

Then for all integers $n>0$ the group $G_n$ can be computed, as a subgroup of $(\Z/n\Z)^\times$. More precisely, given the conductor $f$ of $K$ and the group $G_f\leq (\Z/f\Z)^\times$, the group $G_n$ is the inverse image of the reduction of $G_f$ modulo $\gcd(n,f)$ by the map $(\Z/n\Z)^\times\to (\Z/\gcd(n,f)\Z)^\times$. Since $G_n$ is finite, there exists an algorithm which computes, for all integers $n>0$ and all prime powers $p^j\mid n$, the map $H^1(G_n,\mu_{p^j}) \to H^1(G_n,\mu_{p^{\ord_p(n)}})$ (\emph{i.e.} which computes the domain, the codomain, and the values of the map).

Then Lemma~\ref{lem:CB_algorithm} yields an algorithm to compute the Chevalley-Bass number of $K$: for each odd prime $p\mid \lambda$, one runs (by increasing order) through integers $j$ from $\max\{\ord_p(\lambda),\ord_p(f)-\ord_p(\lambda)\}$ to $\ord_p(f)$. If for some $n$ as in Lemma~\ref{lem:CB_algorithm} the map $H^1(G_n,\mu_{p^j}) \to H^1(G_n,\mu_{p^t})$ is not surjective, replace $j$ by $j+1$ and start again. If the map is surjective for all relevant values of $n$, then the integer $j$ is the $p$-adic valuation of the Chevalley-Bass number. If we reach $\ord_p(f)$ we do not have to compute anything since this is the maximal value one can achieve.

For $p=2$ and $4\mid f$ the same strategy works, up to some change of parameters as detailed in Lemma~\ref{lem:CB_algorithm}. The last case is when $f$ is odd, in which case $\ord_2(\Lambda)=2$ by Theorem~\ref{thm:main}.

%%%%%%%%%%%%%%%%%%%%%%%%%%%%%%%%%%%%%%%%%%%%%

\subsection{Examples}
\label{sub:examples}

Throughout this section, we consider the following situation: $p$ is an odd prime number, and $m$ is a prime number such that $p^4$ divides $m-1$.
Consider a cyclic subgroup
$$
\Delta \leq \Omega_p^{2,4} \times (\Z/m\Z)^\times \leq (\Z/p^4m\Z)^\times
$$
generated by $(1+p^2,\gamma)$ where $\gamma\in(\Z/m\Z)^\times$ has order $p^2$, $p^3$ or $p^4$. Then we have
\begin{equation}
\label{eq:trivialintersection}
\Delta \cap \left(\Omega_p^{2,4} \times \{1\}\right) = \{1\}.
\end{equation}
Under these conditions, the field
$$
K:=\Q(\zeta_{p^4m})^\Delta
$$
has conductor $p^4m$, and the number of roots of unity contained in $K$ is $2p^2$. According to Theorem~\ref{thm:main}, the Chevalley-Bass number $\Lambda$ of $K$ satisfies
$$
\Lambda \in \{4p^2, 4p^3, 4p^4\}.
$$
We shall show that the three cases occur, depending on the order of $\gamma$.

\subsubsection*{The case when $\Lambda$ is minimal}

Suppose that $\Delta$ is cyclic or order $p^2$, generated by $\tau=(1+p^2,\gamma)$ where $\gamma$ has order $p^2$. We claim that $\Lambda=4p^2$ in this case.

Here we have that $\ord_p\exp(\Delta)+\ord_p(\lambda)=2+2=4$. According to Lemma~\ref{lem:CB_algorithm}, $t=3$ is the unique value of $t$ we need to consider, and the corresponding values of $n$ are $n=p^3$ and $n=p^3m$. So it suffices to check the surjectivity of the two maps
\begin{equation}
\label{eq:minimal_Lambda_maps}
H^1(G_{p^3},\mu_{p^2}) \to H^1(G_{p^3},\mu_{p^3}) \quad \text{and} \quad H^1(G_{p^3m},\mu_{p^2}) \to H^1(G_{p^3m},\mu_{p^3})
\end{equation}

Firstly, we have that
$$
G_{p^3} = \Gal(\Q(\zeta_{p^3})/\Q(\zeta_{p^3}) \cap K) = \Gal(\Q(\zeta_{p^3})/\Q(\zeta_{p^2})) = \Omega_p^{2,3},
$$
hence $H^1(G_{p^3},\mu_{p^3})=0$ (Lemma~\ref{lem:2b}), so the first map in \eqref{eq:minimal_Lambda_maps} is surjective for trivial reasons.

Take now $n=p^3m$, then the Galois group $G_{p^3m}$ is the image of $\Delta$ by the reduction map $(\Z/p^4m\Z)^\times\to (\Z/p^3m\Z)^\times$. Thus, $G_{p^3m}$ is the subgroup of $(\Z/p^3m\Z)^\times$ generated by the couple $(1+p^2,\gamma)$, where $1+p^2$ has now order $p$, and $\gamma$ in unchanged. Since $\gamma$ has order $p^2$, the norm $\norm$ of $(1+p^2,\gamma)$ is $p$ times the norm of $1+p^2$, hence is multiplication by $up^2$ (for some $u$ coprime to $p$), according to the calculation made in the proof of Lemma~\ref{lem:2b}. It follows that
$$
H^1(G_{p^3m},\mu_{p^2}) = \ker(\norm)/\img(p^2) = \frac{p\Z/p^3\Z}{p^2\Z/p^3\Z} \simeq \Z/p\Z.
$$
Since $G_{p^3m}$ acts trivially on $\mu_p$, we have $H^1(G_{p^3m},\mu_{p^2})=\Hom(G_{p^3m},\mu_{p^2})=\Z/p^2\Z$. 
To conclude, the short exact sequence $1\to \mu_{p^2}\to \mu_{p^3}\to \mu_p\to 1$ yields a long cohomology exact sequence whose first terms are
$$
0\to \Z/p^2\Z=\Z/p^2\Z \to \Z/p\Z \to H^1(G_{p^3m},\mu_{p^2})=\Z/p^2\Z \to H^1(G_{p^3m},\mu_{p^2})=\Z/p\Z.
$$
(note that $G_{p^3m}$, like $\Delta$, fixes exactly $p^2$th roots of unity, which gives the first three $H^0$). The exactness of this sequence proves that the last map, which is none other than the second map in \eqref{eq:minimal_Lambda_maps}, is surjective, hence the result.

\subsubsection*{The case when $\Lambda$ is intermediate}

Suppose that $\Delta$ is cyclic or order $p^3$, generated by $\tau=(1+p^2,\gamma)$ where $\gamma$ has order $p^3$. We claim that $\Lambda=4p^3$ in this case.

Here we have that $\ord_p\exp(\Delta)+\ord_p(\lambda)=3+2=5$. Let us start with $j=2$, $t=4$ and $n=t^4m=f$, then $G_n=\Delta$. We claim that the map
$$
H^1(\Delta,\mu_{p^2}) \to H^1(\Delta,\mu_{p^4})
$$
is not surjective, which proves that $\ord_p(\Lambda)>2$. Indeed, the short exact sequence $1\to \mu_{p^2}\to \mu_{p^4}\to \mu_{p^2}\to 1$ yields a long cohomology exact sequence whose first terms are
$$
0\to \Z/p^2\Z=\Z/p^2\Z \to \Z/p^2\Z \to H^1(\Delta,\mu_{p^2})=\Z/p^2\Z \to H^1(\Delta,\mu_{p^4}) = \Z/p\Z.
$$
By exactness, the last map has to be zero, hence is not surjective.

It remains to check that, for $j=3$, the relevant maps are all surjective. Here, $t=4$ is the unique value of $t$ we need to consider, and the corresponding values of $n$ are $n=p^4$ and $n=p^4m$. The case when $n=p^4$ is immediate since in this case $G_{p^4}=\Omega_p^{2,4}$. The case when $n=p^4m$ is easy since in this case $G_n=\Delta$ again, and 
the short exact sequence $1\to \mu_{p^3}\to \mu_{p^4}\to \mu_p\to 1$ yields a long cohomology exact sequence whose first terms are
$$
0\to \Z/p^2\Z=\Z/p^2\Z \to \Z/p\Z \to H^1(\Delta,\mu_{p^3})=\Z/p^2\Z \to H^1(\Delta,\mu_{p^4}) = \Z/p\Z.
$$

\subsubsection*{The case when $\Lambda$ is maximal}

Suppose that $\Delta$ is cyclic or order $p^4$, generated by $\tau=(1+p^2,\gamma)$ where $\gamma$ has order $p^4$. We claim that $\Lambda=4p^4$ in this case. This can be checked in just one computation: the map
$$
H^1(\Delta,\mu_{p^3})=\Z/p^2\Z \to H^1(\Delta,\mu_{p^4}) = \Z/p^2\Z
$$
is not surjective. Details are left to the reader.

%%%%%%%%%%%%%%%%%%%%%%%%%%%%%%%%%%%%%%%%%%%%%

%%%%%%%%%%%%%%%%%%%%%%%%%%%%%%%%%%%%%%%%%%%%%

\bibliographystyle{amsalpha}
\bibliography{biblioCB.bib}

%%%%%%%%%%%%%%%%%%%%%%%%%%%%%%%%%%%%%%%%%%%%%

%%%%%%%%%%%%%%%%%%%%%%%%%%%%%%%%%%%%%%%%%%%%%

\bigskip

\textsc{Jean Gillibert}, Universit\'e de Toulouse, Institut de Math{\'e}matiques de Toulouse, CNRS UMR 5219, 118 route de Narbonne, 31062 Toulouse Cedex 9, France.

\emph{E-mail address:} \texttt{jean.gillibert@math.univ-toulouse.fr}
\medskip

\textsc{Florence Gillibert}, \texttt{f.gillibert@yahoo.fr}
\medskip

\textsc{Gabriele Ranieri}, Istituto Superiore Sandro Pertini, viale Cavour, 267, 55100 Lucca, Italy.

\emph{E-mail address:} \texttt{g.ranieri@isipertinilucca.edu.it}

%%%%%%%%%%%%%%%%%%%%%%%%%%%%%%%%%%%%%%%%%%%%%

\end{document}